\theoremstyle{definition}
\newtheorem{theorem}{Theorem}[section]
\newtheorem{prop}[theorem]{Proposition}
\newtheorem{lemma}[theorem]{Lemma}
\newtheorem{rem}{Remark}
\newtheorem{example}{Example}
\newtheorem{defn}{Definition}
\DeclareMathOperator{\graph}{graph}
\title{Dirac pairs on Jacobi algebroids}
\author{Tomoya Nakamura  \thanks{Academic Support Center, Kogakuin University, \textup{2665-1}, Nakano-cho, Hachioji-shi, Tokyo, Japan}
 \\ email: \href{mailto:kt13676@ns.kogakuin.ac.jp}{kt13676@ns.kogakuin.ac.jp}}
\date{\today}
\begin{document}

  \maketitle
  
  \begin{abstract}
  We define Dirac pairs on Jacobi algebroids, which is a generalization of Dirac pairs on Lie algebroids introduced by Kosmann-Schwarzbach. We show the relationship between Dirac pairs on Lie and on Jacobi algebroids, and that Dirac pairs on Jacobi algebroids characterize several compatible structures on manifolds or on Jacobi algebroids.
  \end{abstract}

\section{Introduction}
Poisson and symplectic structures on smooth manifolds have wide application in the theory of integrable systems on smooth manifolds, especially even dimensional manifolds. These structures are generalized on Lie algebroids. One of more generalizations of Poisson 
structures on Lie algebroids are Dirac structures, which are defined on Lie bialgebroids in general \cite{LWX}. Dirac structures on a Lie algebroid $A$ are defined by using the Lie bialgebroid canonically determined for $A$. In terms of applications in the theory of integrable systems, compatible two structures, for example, $P\Omega$- and $\Omega N$-structures \cite{MM2}, are often used. The notion dealing with these compatible structures in a unified way is a Dirac pair, which was introduced by Kosmann-Schwarzbach \cite{K}.

On the other hand, contact structures can be defined on odd dimensional manifolds, and Jacobi structures are generalizations of contact structures. Jacobi structures are generalized as structures on Jacobi algebroids. As a generalization of both Jacobi structures on Jacobi algebroids and Dirac structures on Lie bialgebroids, we can define Dirac structures on Jacobi bialgebroids \cite{NC}. As in the case of Lie algebroid, Dirac structures on Jacobi algebroids can also be defined naturally. In addition, we can define several compatible structures on Jacobi algebroids, for example, $J\Omega$- and $\Omega N$-structures. In this paper, we define Dirac pairs on Jacobi bialgebroids and prove that $J\Omega$- and $\Omega N$-structures can be characterized by Dirac pairs. Furthermore, we investigate relationships between Dirac pairs on Lie and Jacobi bialgebroids.

This paper is divided into four sections. In Section \ref{section:Pleriminaries}, we recall several definitions, properties and examples of Lie and Jacobi algebroids, relations and Dirac pairs on Lie bialgebroids. In Section \ref{Dirac pairs on Jacobi bialgebroids}, we define Dirac pairs on Jacobi bialgebroids. We show that $(\overline{\graph\pi_{1}^{\sharp}},$ $\overline{\graph\pi_{2}^{\sharp}})$, $(\overline{\graph\pi_{1}^{\sharp}},$ $\graph\omega_{2}^{\flat})$ and $(\graph\omega_{1}^{\flat},$ $\graph\omega_{2}^{\flat})$ are Dirac pairs on a Jacobi bialgebroid $((A,\phi_{0}),(A^{*},X_{0}))$ over $M$ if and only if $(\overline{\graph\tilde{\pi}_{1}^{\sharp}},\overline{\graph\tilde{\pi}_{2}^{\sharp}})$, $(\overline{\graph\tilde{\pi}_{1}^{\sharp}},$ $\graph\tilde{\omega}_{2}^{\flat})$ and $(\graph\tilde{\omega}_{1}^{\flat},\graph\tilde{\omega}_{2}^{\flat})$ are Dirac pairs on the induced Lie bialgebroid $(\tilde{A}_{\bar{\phi}_{0}},\tilde{A}^{*}_{\hat{X}_{0}})$ over $M\times \mathbb{R}$, respectively. Here $\pi_{i}\in\Gamma(\Lambda^2A)$ and $\omega_{i}\in\Gamma(\Lambda^2A^*)$ are elements satisfying the Maurer-Cartan type equation ($i=1,2$), and we set $\tilde{\pi}_{i}:=e^{-t}\pi_{i}$ in $\Gamma (\Lambda ^{2}\tilde{A})$ and $\tilde{\omega}_{i}:=e^{t}\omega_{i}$ in $\Gamma (\Lambda ^{2}\tilde{A}^{*})$, where $t$ is the standard coordinate in $\mathbb{R}$. This is the main theorem in this paper. In Section \ref{Dirac pairs on Jacobi algebroids}, we investigate Dirac pairs on Jacobi algebroids. We introduce Jacobi pairs and $\phi_0$-presymplectic pairs defined by using Dirac pairs on Jacobi algebroids. We show the relationship between Jacobi (resp. $\phi_0$-presymplectic) pairs and Poisson (resp. presymplectic) pairs, and prove
that there exist a one-to-one correspondence between the non-degenerate 
Jacobi pairs and the $\phi_0$-symplectic pairs on Jacobi algebroids. Moreover, we introduce J$\Omega$- and $\Omega $N-structures on Jacobi algebroids. We show the relationship between J$\Omega$-(resp. $\Omega $N-)structures on Jacobi algebroids and P$\Omega$-(resp. $\Omega$N-)structures on Lie algebroids, and prove that J$\Omega$- and $\Omega $N-structures can be characterized by Dirac pairs on Jacobi algebroids.

\section{Preliminaries}\label{section:Pleriminaries}

 \subsection{Lie and Jacobi algebroids}\label{Lie, Jacobi and generalized Courant algebroids}

A {\it Lie algebroid} over a manifold $M$ is a vector bundle
$A\rightarrow M$ equipped with a Lie bracket $[\cdot,\cdot]_{A}$ on $\Gamma (A)$ and a bundle map $\rho_{A}:A\rightarrow TM$ over $M$, called the {\it anchor}, satisfying the following condition: for any $X,Y$ in $\Gamma (A)$ and $f$ in $C^{\infty }(M)$,
 \begin{align*}
[X,fY]_{A}=f[X,Y]_{A}+(\rho_{A}(X)f)Y.
 \end{align*}

The {\it Schouten bracket} on $\Gamma (\Lambda ^*A)$ is defined similarly to the Schouten bracket $[\cdot ,\cdot ]$ on $\mathfrak{X}^*(M)$. That is, the Schouten bracket $[\cdot ,\cdot ]_A:\Gamma (\Lambda ^kA)\times \Gamma (\Lambda ^lA)\rightarrow \Gamma (\Lambda ^{k+l-1}A)$ is defined as the unique extension of the Lie bracket $[\cdot ,\cdot ]_A$ on $\Gamma (A)$ such that
\begin{align*}
&[f,g]_A=0;\\
&[X,f]_A=\rho_A(X)f;\\
&[X,Y]_A\ \mbox{is the Lie bracket on}\ \Gamma (A);\\
&[D_1,D_2\wedge D_3]=[D_1,D_2]\wedge D_3+(-1)^{\left(a_1+1\right)a_2}D_2\wedge [D_1, D_3];\\
&[D_1,D_2]_A=-(-1)^{(a_1-1)(a_2-1)}[D_2,D_1]_A
\end{align*}
for any $f,g$ in $C^\infty(M)$, $X,Y$ in $\Gamma (A)$ and $D_i$ in $\Gamma (\Lambda ^{a_{i}}A)$. The {\it differential} of the Lie algebroid $A$ is an operator $d_A:\Gamma (\Lambda ^k A^*)\rightarrow \Gamma (\Lambda ^{k+1} A^*)$ defined by for any $\omega $ in $\Gamma (\Lambda^k A^*)$ and $X_0,\dots ,X_k$ in $\Gamma(A)$,
\begin{align}\label{A-gaibibun}
(d_A \omega)(X_0, \dots ,X_k)&=\sum_{i=0}^k(-1)^i\rho_A (X_i)(\omega(X_0,\dots ,\hat{X_i},\dots ,X_k))\nonumber \\
&\quad+\sum _{i<j}^{}(-1)^{i+j}\omega ([X_i,X_j]_A , X_0,\dots ,\hat{X_i},\dots ,\hat{X_j},\dots ,X_k).
\end{align}
For any $X$ in $\Gamma (A)$, the {\it Lie derivative\index{Lie derivative}} $\mathcal{L}_X^A:\Gamma (\Lambda ^k A^*)\rightarrow \Gamma (\Lambda ^k A^*)$ is defined by the {\it Cartan formula} $\mathcal{L}_X^A:=d_A \iota _X +\iota _X d_A$ and $\mathcal{L}_X^A$ are extended on $\Gamma (\Lambda ^*A)$ in the same way as the usual Lie derivative $\mathcal{L}_X$ respectively. Then it follows that $\mathcal{L}_X^AD=[X,D]_A$ for any $D$ in $\Gamma (\Lambda^{*}A)$. 

\begin{example}
(i) Any finite dimensional real Lie algebra is a Lie algebroid over a point.

(ii) For any manifold $M$, the tangent bundle $(TM,[\cdot,\cdot],\mbox{id}_{TM})$ is a Lie algebroid over $M$, where $[\cdot,\cdot]$ is the usual Lie bracket on the vector fields $\mathfrak{X}(M)$. 

(iii) For any vector bundle $A$ over $M$, we set $[\cdot,\cdot]_{A}:=0$ and $\rho_{A}:=0$. Then $A_{0}:=(A,[\cdot,\cdot]_{A},\rho_{A})$ is a Lie algebroid. We call $([\cdot,\cdot]_{A},\rho_{A})$ the {\it trivial Lie algebroid structure} on $A$.
\end{example}

\begin{example}\label{A oplus R}
Let $A$ be a vector bundle over a manifold $M$ and set $A\oplus \mathbb{R}:=A\oplus(M\times \mathbb{R})$. Then the sections $\Gamma (\Lambda^k(A\oplus \mathbb{R}))$ and $\Gamma (\Lambda^k(A\oplus \mathbb{R})^*)$ can be identified with $\Gamma (\Lambda^kA)\oplus \Gamma (\Lambda^{k-1}A)$ and $\Gamma (\Lambda^kA^*)\oplus \Gamma (\Lambda^{k-1}A^*)$ as follows:
\begin{align}
(P,Q)((\alpha_1,f_1),&\dots,(\alpha_k,f_k))\nonumber\\
                     &=P(\alpha_1,\dots,\alpha_k)+\sum_i(-1)^{i+1}f_iQ(\alpha_1,\dots,\hat{\alpha}_i,\dots,\alpha_k),\label{multi-vector field formula}\\
(\alpha,\beta)((X_1,f_1),&\dots,(X_k,f_k))\nonumber\\
                         &=\alpha(X_1,\dots,X_k)+\sum_i(-1)^{i+1}f_i\beta(X_1,\dots,\hat{X}_i,\dots,X_k) \label{differential form formula}
\end{align}
for any $(P,Q)$ in $\Gamma (\Lambda^kA)\oplus \Gamma (\Lambda^{k-1}A)$, $(\alpha,\beta)$ in $\Gamma (\Lambda^kA^*)\oplus \Gamma (\Lambda^{k-1}A^*)$, $(\alpha _i,f_i)$ in $\Gamma (A^*)\oplus C^\infty(M)$ and $(X_i,f_i)$ in $\Gamma (A)\oplus C^\infty(M)$. Moreover under the identifications, the exterior products are given by
\begin{align*}
(P_1,Q_1)\wedge(P_2,Q_2)&=(P_1\wedge P_2,Q_1\wedge P_2+(-1)^{a_1}P_1\wedge Q_2),\\
(\alpha_1,\beta_1)\wedge(\alpha_2,\beta_2)&=(\alpha_1\wedge \alpha_2,\beta_1\wedge \alpha_2+(-1)^{a_1}\alpha_1\wedge \beta_2)
\end{align*}
for any $(P_i,Q_i)$ in $\Gamma (\Lambda^{a_i}A)\oplus \Gamma (\Lambda^{a_i-1}A)$ and $(\alpha_i,\beta_i)$ in $\Gamma (\Lambda^{a_i}A^*)\oplus \Gamma (\Lambda^{a_i-1}A^*)$. Now, assume that $A$ is a Lie algebroid over $M$. Then $(A\oplus \mathbb{R},[\cdot,\cdot]_{A\oplus\mathbb{R}},\rho_{A}\circ\mbox{pr}_1)$ is also a Lie algebroid over $M$, where the bracket $[\cdot,\cdot]_{A\oplus\mathbb{R}}$ is defined by
\begin{align}\label{A oplus R no kakko}
[(X,f),(Y,g)]_{A\oplus\mathbb{R}}&:=([X,Y]_{A},\rho_{A}(X)g-\rho_{A}(Y)f)
\end{align}
and the map $\mbox{pr}_1:A\oplus \mathbb{R}\rightarrow A$ is the canonical projection to the first factor. In this case, the defferential $d_{A\oplus \mathbb{R}}$ of the Lie algebroid $A\oplus \mathbb{R}$ and the Schouten bracket $[\cdot,\cdot]_{A\oplus \mathbb{R}}$ are given by
\begin{align*}
d_{A\oplus \mathbb{R}}(\alpha,\beta)&=(d_{A}\alpha,-d_{A}\beta),\\
[(P_1,Q_1),(P_2,Q_2)]_{A\oplus\mathbb{R}}&=([P_1,P_2],(-1)^{k+1}[P_1,Q_2]-[Q_1,P_2])
\end{align*}
for any $(\alpha,\beta)$ in $\Gamma (\Lambda^{k}A^{*})\oplus \Gamma (\Lambda^{k-1}A^{*})$ and $(P_i,Q_i)$ in $\Gamma (\Lambda^{k}A)\oplus \Gamma (\Lambda^{k}A)$. 
\end{example}

A pair $(A,\phi_0)$ is a {\it Jacobi algebroid} over $M$ if $A=(A,[\cdot ,\cdot ]_A,\rho_A)$ is a Lie algebroid over $M$ and $\phi_0$ in $\Gamma(A^*)$ is $d_A$-closed, that is, $d_A\phi_{0}=0$. 

\begin{example}
For a Lie algebroid $A\oplus \mathbb{R}$ in Example \ref{A oplus R}, We set $\phi_0:=(0,1)$ in $\Gamma(A^{*}\oplus \mathbb{R})=\Gamma (A^{*})\oplus C^{\infty}(M)$. Then $(A\oplus \mathbb{R},\phi_0)$ is a Jacobi algebroid.
\end{example}

\begin{example}\label{trivial example}
For any Lie algebroid $A$ over $M$, we set $\phi_0:=0$. Then $(A,\phi_0)$ is a Jacobi algebroid. We call $\phi_0$ the {\it trivial Jacobi algebroid structure} on $A$. Therefore any Lie algebroid is a Jacobi algebroid.
\end{example}

For a Jacobi algebroid $(A,\phi_{0})$, there is the {\it $\phi_0$-Schouten bracket} $[\cdot,\cdot]_{A,\phi_0}$ on $\Gamma (\Lambda^*A)$ given by
\begin{align*}\label{phi-bracket}
[D_1,D_2]_{A,\phi_0}:=[D_1,D_2]_A+(a_1-1)&D_1\wedge \iota _{\phi_0}D_2\\
                                                     &-(-1)^{a_1+1}(a_2-1)\iota_{\phi_0}D_1\wedge D_2
\end{align*}
for any $D_i$ in $\Gamma(\Lambda ^{a_i}A)$, where $[\cdot,\cdot]_A$ is the Schouten bracket of the Lie algebroid $A$. The {\it $\phi_0$-differential} $d_{A,\phi_0}$ and the {\it $\phi_0$-Lie derivative} $\mathcal{L}_X^{A,\phi_0}$ are defined by
\begin{align*}
d_{A,\phi_0}\omega :=d_A\omega +\phi_0\wedge \omega,\quad \mathcal{L}_X^{A,\phi_0}:=\iota _X\circ d_{A,\phi_0}+d_{A,\phi_0}\circ \iota_X
\end{align*}
for any $\omega$ in $\Gamma (\Lambda ^*A^*)$ and $X$ in $\Gamma (A)$.
 
 We notice that
\begin{align*}
(d_{A,\phi_0}\omega )(X_0,&\dots,X_k)\\
                          &:=\sum_i(-1)^{i+1}\rho_{A,\phi_0}(X_i)\omega(X_0,\dots,\hat{X}_i,\dots,X_k)\\
                          &\quad +\sum_{i<j}(-1)^{i+j}\omega([X_i,X_j]_A,X_0,\dots,\hat{X}_i,\dots,\hat{X}_j,\dots,X_k)
\end{align*}
for any $\omega$ in $\Gamma (\Lambda^kA^*)$ and $X_i$ in $\Gamma (A)$, and that
\begin{align*}
\mathcal{L}_X^{A,\phi_0}\omega=\mathcal{L}_X^A\omega+\phi_0(X)\omega
\end{align*}
for any $\omega$ in $\Gamma (\Lambda ^*A^*)$ and $X$ in $\Gamma (A)$. Here $\rho_{A,\phi_0}(X)f:=\rho_A(X)f+\langle \phi_0,X\rangle f$ for any $X$ in $\Gamma (A)$ and $f$ in $C^\infty(M)$. We call a $d_{A,\phi_0}$-closed $2$-cosection $\omega$, i.e., $d_{A,\phi_0}\omega=0$, a {\it $\phi_0$-presymplectic structure} on $(A,\phi_{0})$. A $\phi_0$-presymplectic structure $\omega$ is called a {\it $\phi_0$-symplectic structure} if $\omega$ is non-degenerate.

 \begin{example}\label{contact example}
We consider $A:=TM\oplus\mathbb{R}$ and $\phi_{0}:=(0,1)$ in $\Omega^{1}(M)\oplus C^{\infty}(M)$. Then any $\omega $ in $\Omega^{2}(M)\oplus \Omega^{1}(M)$ can be written as $\omega=(\alpha,\beta)\ (\alpha\in\Omega^{2}(M),\beta\in\Omega^{1}(M))$. Since
 \begin{align*}
d_{A,\phi_{0}}\omega=d_{TM\oplus\mathbb{R},(0,1)}(\alpha,\beta)=(d\alpha,\alpha-d\beta),
\end{align*}
$\omega$ is $(0,1)$-presymplectic on $(TM\oplus\mathbb{R},(0,1))$ if and only if $\omega =(d\beta,\beta)\ (\beta\in\Omega^{1}(M))$. Moreover setting $\dim M=2n+1$, we see that a $(0,1)$-presymplectic strucutre $\omega $ is non-degenerate if and only if $\beta\wedge (d\beta)^{n}\neq 0$, that is, $\beta$ is a {\it contact structure} on $M$. Therefore a $(0,1)$-symplectic structure on $(TM\oplus \mathbb{R},(0,1))$ is just a contact structure on $M$.
 \end{example}

A {\it Jacobi structure} on a Jacobi algebroid $(A,\phi_{0})$ is a $2$-section $\pi $ in $\Gamma (\Lambda^{2}A)$ satisfying the condition
\begin{equation}\label{Jacobi def equation}
[\pi,\pi]_{A,\phi_{0}}=0.
\end{equation}
For any $2$-section $\pi $ in $\Gamma (\Lambda^2A)$, we define a skew-symmetric bilinear bracket $[\cdot,\cdot]_{\pi,\phi_{0}} $ on $\Gamma (A^*)$ by for any $\xi,\eta$ in $\Gamma (A^*)$,
\begin{align}
[\xi,\eta]_{\pi,\phi_{0}}:=\mathcal{L}_{\pi^\sharp \xi}^{A,\phi_0}\eta -\mathcal{L}_{\pi^\sharp \eta}^{A,\phi_0}\xi -d_{A,\phi_0}\langle \pi^\sharp \xi,\eta \rangle,
\end{align}
where a bundle map $\pi^\sharp:A^{*}\rightarrow A$ over $M$ is defined by $\langle \pi^\sharp \xi,\eta\rangle:=\pi(\xi,\eta)$. Then the following holds:
 \begin{align}\label{Jacobi structure bracket property}
\frac{1}{2}[\pi ,\pi ]_{A,\phi_0}(\xi,\eta,\cdot)=[\pi^\sharp \xi,\pi^\sharp \eta ]_{A}-\pi^\sharp[\xi,\eta]_{\pi,\phi_{0}}.
\end{align}
In particular, a $2$-section $\pi $ is a Jacobi structure if and only if
 \begin{align}
[\pi^\sharp \xi,\pi^\sharp \eta ]_{A,\phi_0}=\pi^\sharp[\xi,\eta]_{\pi,\phi_{0}}.
\end{align}
Two Jacobi structures $\pi$ and $\pi'$ on a Jacobi algebroid $(A,\phi_0)$ is {\it compatible} if $\pi+\pi'$ is also a Jacobi structure. Obviously, two Jacobi structures $\pi$ and $\pi'$ are compatible if and only if $[\pi,\pi']_{A,\phi_0}=0$. By (\ref{Jacobi def equation}) and (\ref{Jacobi structure bracket property}), we obtain
 \begin{align}\label{Jacobi structure no wa}
[\pi ,\pi' ]_{A,\phi_0}(\xi,\eta,\cdot)=[\pi^\sharp \xi,\pi^{\prime\sharp} \eta ]_{A}+[\pi^{\prime\sharp} \xi,\pi^\sharp \eta ]_{A}-\pi^\sharp[\xi,\eta]_{\pi',\phi_{0}}-\pi^{\prime\sharp}[\xi,\eta]_{\pi,\phi_{0}}
\end{align}
for two Jacobi structures $\pi$ and $\pi'$.

\begin{example}[Poisson structures]
For any Lie algebroid $A$ equipped with the trivial Jacobi algebroid structure $0$, it follows that $[\cdot,\cdot]_{A,0}=[\cdot,\cdot]_{A}$. Hence Jacobi structures on $(A,0)$ are just Poisson structures on $A$
. Properties of the bracket $[\cdot,\cdot]_{\pi}:=[\cdot,\cdot]_{\pi,0}$, where $\pi $ is any $2$-section on $A$, is reproduced by (\ref{Jacobi structure bracket property}) and so on.
\end{example}

\begin{example}\label{Jacobi manifold}
Let $A$ be a Lie algebroid over $M$, $\Lambda $ a $2$-section on $A$ and $E$ a section on $A$ satisfying
\begin{align*}
[\Lambda ,\Lambda ]_{A}=2E\wedge \Lambda ,\quad [E,\Lambda ]_{A}=0.
\end{align*}
Then a pair $(\Lambda ,E)$ in $\Gamma (\Lambda ^{2}A)\oplus \Gamma (A)\cong \Gamma (\Lambda ^{2}(A\oplus \mathbb{R}))$ is a Jacobi structure on a Jacobi algebroid $(A\oplus \mathbb{R},(0,1))$, i.e., it satisfies $[(\Lambda ,E),(\Lambda ,E)]_{A\oplus \mathbb{R},(0,1)}=0$. When $(\Lambda ,E)$ is a Jacobi structure on $(TM\oplus \mathbb{R},(0,1))$, we call it a {\it Jacobi structure on $M$} and a triple $(M,\Lambda,E)$ a {\it Jacobi manifold}. If $\pi $ is a Poisson structure on $A$, Then $(\pi,0)$ is a Jacobi structure on $(A\oplus \mathbb{R},(0,1))$.
\end{example}

It is well known that there exists a one-to-one correspondence between $\phi_{0}$-symplectic structures on $(A,\phi_{0})$ and non-degenerate Jacobi structures on $(A,\phi_{0})$. In fact, for a non-degenerate Jacobi structure $\pi$ on $(A,\phi_{0})$, a $2$-cosection $\omega_{\pi}$ characterized by $\omega_{\pi}^{\flat}=-(\pi^{\sharp})^{-1}$ is $\phi_{0}$-symplectic on $(A,\phi_{0})$, where for any $2$-cosection $\omega$, a bundle map $\omega^\flat:A\rightarrow A^{*}$ over $M$ is defined by $\langle \omega^\flat X,Y\rangle:=\omega(X,Y)$.

Let $(A,\phi_0)$ be a Jacobi algebroid over $M$. We set $\tilde{A}:=A\times \mathbb{R}$. Then $\tilde{A}$ is a vector bundle over $M\times \mathbb{R}$. The sections $\Gamma (\tilde{A})$ can be identified with the set of time-dependent sections of $A$. Under this identification, we can define two Lie algebroid structures $([\cdot,\cdot\hat{]}_A^{\phi_0},\hat{\rho}_A^{\phi_0})$ and $([\cdot,\cdot\bar{]}_A^{\phi_0},\bar{\rho}_A^{\phi_0})$ on $\tilde{A}$, where for any $\tilde{X}$ and $\tilde{Y}$ in $\Gamma(\tilde{A})$,
\begin{align}
\label{hat kakko} [\tilde{X},\tilde{Y}\hat{]}_A^{\phi_0}&:=e^{-t}\left([\tilde{X},\tilde{Y}]_A+\langle\phi_0,\tilde{X}\rangle\left(\frac{\partial \tilde{Y}}{\partial t}-\tilde{Y}\right)-\langle\phi_0,\tilde{Y}\rangle\left(\frac{\partial \tilde{X}}{\partial t}-\tilde{X}\right)\right),\\ 
\label{hat anchor} \hat{\rho}_A^{\phi_0}(\tilde{X})&:=e^{-t}\left(\rho_A(\tilde{X})+\langle \phi_0,\tilde{X}\rangle \frac{\partial}{\partial t}\right),\\
\label{bar kakko} [\tilde{X},\tilde{Y}\bar{]}_A^{\phi_0}&:=[\tilde{X},\tilde{Y}]_A+\langle\phi_0,\tilde{X}\rangle\frac{\partial \tilde{Y}}{\partial t}-\langle\phi_0,\tilde{Y}\rangle\frac{\partial \tilde{X}}{\partial t},\\ 
\label{bar anchor} \bar{\rho}_A^{\phi_0}(\tilde{X})&:=\rho_A(\tilde{X})+\langle \phi_0,\tilde{X}\rangle \frac{\partial}{\partial t}.
\end{align}
Conversely, for a Lie algebroid $A$ over $M$ and a section $\phi_0$ on $A$, if the triple $(\tilde{A},[\cdot,\cdot\hat{]}_A^{\phi_0},\hat{\rho}_A^{\phi_0})$ (resp. $(\tilde{A},[\cdot,\cdot\bar{]}_A^{\phi_0},\bar{\rho}_A^{\phi_0})$) defined by (\ref{hat kakko}) and (\ref{hat anchor}) (resp. (\ref{bar kakko}) and (\ref{bar anchor})) is a Lie algebroid over $M\times \mathbb{R}$, then $(A,\phi_0)$ is a Jacobi algebroid over $M$, i.e., $d_{A}\phi_0=0$. A vector bundle $\tilde{A}$ equipped with the Lie algebroid structure $([\cdot,\cdot\hat{]}_A^{\phi_0},\hat{\rho}_A^{\phi_0})$ (resp. $([\cdot,\cdot\bar{]}_A^{\phi_0},\bar{\rho}_A^{\phi_0})$) is denoted by $\tilde{A}_{\hat{\phi}_{0}}$ (resp. $\tilde{A}_{\bar{\phi}_{0}}$). Let $\hat{d}_{A}^{\phi_{0}}$ (resp. $\bar{d}_{A}^{\phi_{0}}$) and $\widehat{\mathcal{L}^{A}}^{\phi_{0}}$ (resp. $\overline{\mathcal{L}^{A}}^{\phi_{0}}$) be the differential of $\tilde{A}_{\hat{\phi}_{0}}$ (resp. $\tilde{A}_{\bar{\phi}_{0}}$) and the Lie derivative on $\tilde{A}_{\hat{\phi}_{0}}$ (resp. $\tilde{A}_{\bar{\phi}_{0}}$), respectively. Then for any $\tilde{f}$ in $C^{\infty}(M\times \mathbb{R})$ and $\tilde{\phi}$ in $\Gamma (\tilde{A})$, the following formulas hold \cite{IM}:
\begin{align}
&\hat{d}_{A}^{\phi_{0}}\tilde{f}=e^{-t}\left(d_{A}\tilde{f}+\frac{\partial \tilde{f}}{\partial t}\phi_{0}\right),\quad \hat{d}_{A}^{\phi_{0}}\tilde{\phi}=e^{-t}\left(d_{A,\phi_{0}}\tilde{\phi}+\phi_{0}\wedge\frac{\partial \tilde{\phi}}{\partial t}\right);\\
&\bar{d}_{A}^{\phi_{0}}\tilde{f}=d_{A}\tilde{f}+\frac{\partial \tilde{f}}{\partial t}\phi_{0},\quad \bar{d}_{A}^{\phi_{0}}\tilde{\phi}=d_{A}\tilde{\phi}+\phi_{0}\wedge\frac{\partial \tilde{\phi}}{\partial t}.
\end{align}

The definition and properties of Jacobi bialgebroids 
are the followings.

 \begin{defn}[\cite{IM}]\label{Jacobi bialgebroid def}
Let $(A,\phi_0)$ and $(A^*,X_0)$ be two Jacobi algebroids over $M$ in duality. Then a pair $((A,\phi_0),(A^*,X_0))$ is a {\it Jacobi bialgebroid} over $M$ if for any $X,Y$ in $\Gamma(A)$ and $P$ in $\Gamma (\Lambda ^kA)$,
\begin{align*}
 d_{A^{*}\!,X_0}[X,Y]_{A}&=[d_{A^{*}\!,X_0}X,Y]_{A,\phi_0}+[X,d_{A^{*}\!,X_0}Y]_{A,\phi_0},\\
\mathcal{L}^{A,\phi_0}_{X_0}P&+\mathcal{L}^{A^*\!,X_0}_{\phi_0}P=0,
\end{align*}
 where $d_{A^{*}\!,X_0}$ is the $X_0$-differential and $\mathcal{L}^{A^*\!,X_0}$ is the $X_0$-Lie derivative of $(A^*,X_0)$.
 \end{defn}

  \begin{example}[Lie bialgebroids \cite{MX}]\label{Lie bialgebroids}
Let $A$ and $A^{*}$ be Lie algebroids in duality equipped with the trivial Jacobi algebroid structures $0$. Then a pair $((A,0),(A^{*},0))$ is a Jacobi bialgebroid if and only if a pair $(A,A^{*})$ is a Lie bialgebroid.
 \end{example}

  \begin{example}[\cite{IM}]\label{trivial example}
 For any Jacobi algebroid $(A,\phi_{0})$ and its dual bundle $(A_{0}^{*},0)$ equipped with the trivial Lie and Jacobi algebroid structure, a pair $((A,\phi_0),(A_{0}^{*},0))$ is a Jacobi bialgebroid.
 \end{example}
 
 Proposition \ref{Jacobi and Lie bilagebroid} is the relation between a Jacobi and Lie bialgebroid. 
 
 \begin{prop}[\cite{IM}]\label{Jacobi and Lie bilagebroid}
A pair $((A,\phi_0),(A^*,X_0))$ is a Jacobi bialgebroid over $M$ if and only if a pair $\left(\tilde{A}_{\bar{\phi}_{0}},\tilde{A}^{*}_{\hat{X}_{0}}\right)=((\tilde{A},[\cdot,\cdot\bar{]}_A^{\phi_{0}},\bar{\rho}_A^{\phi_0}),(\tilde{A}^*,[\cdot,\cdot\hat{]}_A^{X_0},\hat{\rho}_A^{X_{0}}))$ is a Lie bialgebroid over $M\times \mathbb{R}$.
 \end{prop}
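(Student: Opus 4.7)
My plan is to expand the Lie bialgebroid compatibility for the pair $(\tilde A_{\bar\phi_0},\tilde A^*_{\hat X_0})$ on $M\times\mathbb{R}$ using the explicit formulas (\ref{hat kakko})--(\ref{bar anchor}) together with the expressions for $\hat d^{X_0}_{A^*}$ recalled just before the proposition, and to recognize the two Jacobi bialgebroid axioms of Definition \ref{Jacobi bialgebroid def} as independent tensorial pieces of the resulting equation.

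First, I would dispose of the underlying-Lie-algebroid part. As noted in the paragraph following (\ref{bar anchor}), the existence of $\tilde A_{\bar\phi_0}$ and $\tilde A^*_{\hat X_0}$ as Lie algebroids over $M\times\mathbb{R}$ is equivalent to $d_A\phi_0=0$ and $d_{A^*}X_0=0$, hence to both $(A,\phi_0)$ and $(A^*,X_0)$ being Jacobi algebroids. What then remains is to match the derivation identity $\hat d^{X_0}_{A^*}[\tilde X,\tilde Y\bar{]}^{\phi_0}_A = [\hat d^{X_0}_{A^*}\tilde X,\tilde Y\bar{]}^{\phi_0}_A + [\tilde X,\hat d^{X_0}_{A^*}\tilde Y\bar{]}^{\phi_0}_A$ with the two defining identities of a Jacobi bialgebroid. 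Testing this first on time-independent $\tilde X=X,\tilde Y=Y\in\Gamma(A)\subset\Gamma(\tilde A)$, we have $[X,Y\bar{]}^{\phi_0}_A=[X,Y]_A$ and $\partial_t X=\partial_t Y=0$; substituting $\hat d^{X_0}_{A^*}\tilde\xi=e^{-t}(d_{A^*,X_0}\tilde\xi+X_0\wedge\partial_t\tilde\xi)$ and unwinding the Schouten extension of the bar-bracket, the identity collapses, up to the overall factor $e^{-t}$, to the first Jacobi bialgebroid axiom $d_{A^*,X_0}[X,Y]_A=[d_{A^*,X_0}X,Y]_{A,\phi_0}+[X,d_{A^*,X_0}Y]_{A,\phi_0}$.

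Next, I would reintroduce $t$-dependence by perturbing to $\tilde X=f(t)X$ and $\tilde Y=g(t)Y$, and collect the new terms produced by the $\partial/\partial t$-contributions in (\ref{bar kakko}) and in the hat-differential. After subtracting the pieces already accounted for in the previous step, what remains is an identity of the form $(\mathcal L^{A,\phi_0}_{X_0}+\mathcal L^{A^*,X_0}_{\phi_0})X=0$ for each $X\in\Gamma(A)$; extending via the Leibniz rule for the two Schouten brackets then yields the second Jacobi bialgebroid axiom $\mathcal L^{A,\phi_0}_{X_0}P+\mathcal L^{A^*,X_0}_{\phi_0}P=0$ on all of $\Gamma(\Lambda^kA)$. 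The converse direction is obtained by running the same manipulations in reverse: the first axiom supplies the time-independent piece of the derivation identity, while the second supplies the $\partial/\partial t$-corrections.

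The main obstacle will be the bookkeeping of the $e^{-t}$ prefactors together with the interwoven $\phi_0$- and $X_0$-corrections inside the bar- and hat-brackets, since the two axioms are easy to conflate if processed simultaneously. A clean way to manage this is to split each input at the outset into its time-independent part and its $\partial/\partial t$-piece, so that the two Jacobi bialgebroid axioms appear as coefficients of linearly independent tensorial expressions. A more conceptual alternative would be to phrase both sides in terms of the graded commutator of $\hat d^{X_0}_{A^*}$ with the Schouten bracket on $\tilde A_{\bar\phi_0}$, and invoke the standard characterization of Lie bialgebroids via this commutator, mirroring the original proof strategy of \cite{IM}.
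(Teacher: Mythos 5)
The paper itself gives no proof of this proposition; it is quoted from \cite{IM}. Your computational route --- expanding the derivation identity for $\hat{d}_{A^{*}}^{X_{0}}$ against $[\cdot,\cdot\bar{]}_A^{\phi_0}$ and separating the time-independent piece from the $\partial/\partial t$-corrections --- is essentially the strategy of the original argument in \cite{IM}, and your first step is sound: on time-independent sections the $e^{-t}$ prefactors, together with the terms coming from $\bar{\rho}_A^{\phi_0}(Y)e^{-t}=-\langle\phi_0,Y\rangle e^{-t}$, do conspire to turn the ordinary Schouten brackets into the $\phi_0$-Schouten brackets of the first Jacobi bialgebroid axiom.

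There is, however, a concrete gap in how you recover the second axiom. The condition $\mathcal{L}^{A,\phi_0}_{X_0}P+\mathcal{L}^{A^*\!,X_0}_{\phi_0}P=0$ is required for all $P\in\Gamma(\Lambda^kA)$, including $k=0$, and the Leibniz rule for the two Schouten brackets propagates the identity upward from degree $1$ to higher degrees but does not reach degree $0$. The degree-$0$ case carries genuine extra content: applied to $f\in C^\infty(M)$ it reads $\bigl(\rho_A(X_0)+\rho_{A^*}(\phi_0)\bigr)f+2\langle\phi_0,X_0\rangle f=0$, i.e.\ $\langle\phi_0,X_0\rangle=0$ and $\rho_A(X_0)=-\rho_{A^*}(\phi_0)$, and these two identities are precisely what is needed to make the $\partial/\partial t$-bookkeeping close up in the converse direction. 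In your decomposition they should emerge from the terms bilinear in $f'(t)$ and $g'(t)$ when you test the derivation identity on $f(t)X$ and $g(t)Y$, but your sketch records only the degree-$1$ residue $(\mathcal{L}^{A,\phi_0}_{X_0}+\mathcal{L}^{A^*\!,X_0}_{\phi_0})X=0$. You need to extract and state the degree-$0$ identity separately, and you should also justify that sections of the form $f(t,\cdot)X$ with $X\in\Gamma(A)$ suffice to test the (non-tensorial) derivation identity, i.e.\ that they generate $\Gamma(\tilde{A})$ over $C^{\infty}(M\times\mathbb{R})$ and that the function-linearity anomalies on the two sides are accounted for.
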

 
Proposition \ref{Jacobi bilagebroid gyaku ok} follows immediately by Proposition \ref{Jacobi and Lie bilagebroid}.

 \begin{prop}[\cite{IM}]\label{Jacobi bilagebroid gyaku ok}
 If $((A,\phi_0),(A^*,X_0))$ is a Jacobi bialgebroid, then so is $((A^*,X_0),(A,\phi_0))$.
 \end{prop}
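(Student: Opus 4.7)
The strategy is to lift the statement to the associated Lie bialgebroid over $M\times\mathbb{R}$ via Proposition~\ref{Jacobi and Lie bilagebroid} and then invoke the symmetry of ordinary Lie bialgebroids.

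First I would apply Proposition~\ref{Jacobi and Lie bilagebroid} to the hypothesis: the assumption that $((A,\phi_0),(A^*,X_0))$ is a Jacobi bialgebroid over $M$ is equivalent to the statement that $(\tilde{A}_{\bar{\phi}_0},\tilde{A}^{*}_{\hat{X}_0})$ is a Lie bialgebroid over $M\times\mathbb{R}$. I would then invoke the classical Mackenzie--Xu symmetry theorem for Lie bialgebroids: if $(B,B^*)$ is a Lie bialgebroid, then so is $(B^*,B)$. Applied with $B=\tilde{A}_{\bar{\phi}_0}$, this produces the Lie bialgebroid $(\tilde{A}^{*}_{\hat{X}_0},\tilde{A}_{\bar{\phi}_0})$ over $M\times\mathbb{R}$.

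To close the loop, I would apply Proposition~\ref{Jacobi and Lie bilagebroid} in the reverse direction to the candidate pair $((A^*,X_0),(A,\phi_0))$: by that proposition (with the two Jacobi algebroids playing exchanged roles), this pair is a Jacobi bialgebroid iff $(\tilde{A}^{*}_{\bar{X}_0},\tilde{A}_{\hat{\phi}_0})$ is a Lie bialgebroid over $M\times\mathbb{R}$. The only real obstacle is reconciling the bar/hat labels: Mackenzie--Xu symmetry produced $(\tilde{A}^{*}_{\hat{X}_0},\tilde{A}_{\bar{\phi}_0})$, whereas Proposition~\ref{Jacobi and Lie bilagebroid} asks us for $(\tilde{A}^{*}_{\bar{X}_0},\tilde{A}_{\hat{\phi}_0})$. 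The resolution is that the characterization in Proposition~\ref{Jacobi and Lie bilagebroid} holds symmetrically with the bar/hat assignments exchanged on both factors at once; this is a consequence of the gauge factor $e^{\pm t}$ relating (\ref{hat kakko})--(\ref{hat anchor}) with (\ref{bar kakko})--(\ref{bar anchor}), so the Lie bialgebroid condition is preserved under this interchange. With this identification, the conclusion is immediate.

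The main (indeed only) technical point is thus the bar/hat bookkeeping; once this is absorbed, the proposition is a direct combination of Proposition~\ref{Jacobi and Lie bilagebroid} with the symmetry of the ordinary Lie bialgebroid axioms.
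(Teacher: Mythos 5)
Your proposal is correct and follows exactly the route the paper intends: the paper's entire proof is the remark that the proposition ``follows immediately by Proposition~\ref{Jacobi and Lie bilagebroid}'', i.e.\ lift to $M\times\mathbb{R}$, apply Mackenzie--Xu duality, and descend. Your bar/hat reconciliation is the one point the paper glosses over, and your claim there is right: multiplication by $e^{-t}$ is a Lie algebroid isomorphism $\tilde{A}_{\hat{\phi}_0}\to\tilde{A}_{\bar{\phi}_0}$ (and likewise on $\tilde{A}^{*}$), and these maps are mutually dual up to inversion, so the interchange indeed preserves the Lie bialgebroid condition.
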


To define Dirac structures on a Jacobi bialgebroid $((A,\phi_0),(A^{*},X_0))$, we introduce the following pairings $(\cdot,\cdot )_{\pm}$ and bracket $[\![\cdot,\cdot]\!]$ on the Whitney sum $A\oplus A^{*}$:
 \begin{align*}
(X+\xi,Y+\eta )_{\pm} &:=\frac{1}{2}\left(\langle\xi,Y\rangle\pm \langle\eta,X\rangle \right);\\
[\![X+\xi,Y+\eta]\!]&:=([X,Y]_{A,\phi_0}+\mathcal{L}_{\xi}^{A^{*}\!,X_0}Y-\mathcal{L}_{\eta}^{A^{*}\!,X_0}X\\
                    &\qquad \qquad \qquad \qquad-d_{A^*\!,X_0}(X+\xi,Y+\eta)_-)\\
                    &\qquad +([\xi,\eta]_{A^{*},\!X_0}+\mathcal{L}_{X}^{A,\phi_0}\eta-\mathcal{L}_{Y}^{A,\phi_0}\xi\\
                    &\qquad \qquad \qquad \qquad+d_{A,\phi_0}(X+\xi,Y+\eta)_-);\\
\end{align*}

We notice that the pairings $(\cdot,\cdot )_{\pm}'$ and the bracket $[\![\cdot,\cdot]\!]'$ defined as above on $A^{*}\oplus A$ for a Jacobi bialgebroid $((A^{*},X_0),(A,\phi_0))$ satisfy
 \begin{align}\label{kakko-tachi}
(\cdot,\cdot )_{\pm}'=\pm(\cdot,\cdot )_{\pm},\quad [\![\cdot,\cdot]\!]'=[\![\cdot,\cdot]\!].
\end{align}

\begin{defn}[\cite{NC}]
Let $((A,\phi_0),(A^{*},X_0))$ be a Jacobi bialgebroid over $M$. A subbundle $L$ of $A\oplus A^{*}$ is a {\it Dirac structure} on $((A,\phi_0),(A^{*},X_0))$ if it is maximally isotropic under the pairing $(\cdot ,\cdot )_+$ and $\Gamma (L)$ is closed under the bracket $[\![\cdot,\cdot]\!]$. By (\ref{kakko-tachi}), Dirac structures on $((A,\phi_0),(A^{*},X_0))$ and on $((A^{*},X_0),(A,\phi_0))$ coincide. For a Jacobi algebroid $(A,\phi_0)$, we call a Dirac structure on a Jacobi bialgebroid $((A,\phi_0),(A^{*}_{0},0))$ in Example \ref{trivial example} a {\it Dirac structure on $(A,\phi_{0})$}. 
\end{defn}

 Let $((A,\phi_0),(A^{*},X_0))$ be a Jacobi bialgebroid over $M$, $\pi$ and $\omega $ elements in $\Gamma (\Lambda^2A)$ and in $\Gamma (\Lambda^2A^{*})$, respectively. We set
 \begin{align*}
\graph \pi^\sharp&:=\{\pi^\sharp\xi+\xi\ |\ \xi\in A^{*}\}\subset A\oplus A^{*},\\
\graph \omega^\flat&:=\{X+\omega X\ |\ X\in A\}\subset A\oplus A^{*}.
\end{align*}

\begin{theorem}[\cite{NC}]
 With the above notations, the graph $\graph \pi ^\sharp$ (resp. $\graph \omega ^\flat$) of a bundle map $\pi^\sharp$ (resp. $\omega^\flat$) is a Dirac structure on $((A,\phi_0),$ $(A^{*},X_0))$ if and only if $\pi $ (resp. $\omega$) satisfies the {\it Maurer-Cartan type equation}: 
\begin{align}\label{MC equation}
d_{A^{*}\!,X_0}\pi +\frac{1}{2}[\pi ,\pi]_{A,\phi_0}=0\quad \left(resp. \ d_{A,\phi_0}\omega +\frac{1}{2}[\omega ,\omega]_{A^{*}\!,X_0}=0\right).
\end{align}
 \end{theorem}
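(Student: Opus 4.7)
The plan is to verify the two defining conditions of a Dirac structure---maximal isotropy under $(\cdot,\cdot)_+$ and closedness of sections under $[\![\cdot,\cdot]\!]$---with the first routine and all the content in the second. I will carry out the argument for $\graph\pi^\sharp$ in detail and obtain the statement for $\graph\omega^\flat$ by swapping the two Jacobi algebroids. For the isotropy of $\graph\pi^\sharp$, skew-symmetry of $\pi$ yields $(\pi^\sharp\xi+\xi,\pi^\sharp\eta+\eta)_+=\tfrac{1}{2}(\pi(\eta,\xi)+\pi(\xi,\eta))=0$, and $\mathrm{rank}(\graph\pi^\sharp)=\mathrm{rank}(A)$ is the maximal isotropic rank.

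For the bracket, I would compute $[\![\pi^\sharp\xi+\xi,\pi^\sharp\eta+\eta]\!]$ directly from its definition. Since $(\pi^\sharp\xi+\xi,\pi^\sharp\eta+\eta)_-=-\pi(\xi,\eta)=-\langle\pi^\sharp\xi,\eta\rangle$, the defining identity of $[\xi,\eta]_{\pi,\phi_0}$ recalled earlier in the paper collapses the $A^*$-component of the bracket to $[\xi,\eta]_{A^*,X_0}+[\xi,\eta]_{\pi,\phi_0}$. Therefore $\graph\pi^\sharp$ is $[\![\cdot,\cdot]\!]$-closed if and only if the $A$-component equals $\pi^\sharp\bigl([\xi,\eta]_{A^*,X_0}+[\xi,\eta]_{\pi,\phi_0}\bigr)$ for every $\xi,\eta$. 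Invoking \eqref{Jacobi structure bracket property} to expand $[\pi^\sharp\xi,\pi^\sharp\eta]_{A,\phi_0}=[\pi^\sharp\xi,\pi^\sharp\eta]_A$ as $\pi^\sharp[\xi,\eta]_{\pi,\phi_0}+\tfrac{1}{2}[\pi,\pi]_{A,\phi_0}(\xi,\eta,\cdot)$ and cancelling the $\pi^\sharp[\xi,\eta]_{\pi,\phi_0}$ contributions, the condition reduces to
\[
\tfrac{1}{2}[\pi,\pi]_{A,\phi_0}(\xi,\eta,\cdot)+\mathcal{L}_\xi^{A^{*}\!,X_0}\pi^\sharp\eta-\mathcal{L}_\eta^{A^{*}\!,X_0}\pi^\sharp\xi+d_{A^{*}\!,X_0}(\pi(\xi,\eta))-\pi^\sharp[\xi,\eta]_{A^{*}\!,X_0}=0.
\]
The Cartan-type formula for the $X_0$-differential on the Lie algebroid $A^*$ applied to the $2$-vector $\pi$ then recognises the last four summands as $(d_{A^{*}\!,X_0}\pi)(\xi,\eta,\cdot)$, so the condition becomes $\bigl(d_{A^{*}\!,X_0}\pi+\tfrac{1}{2}[\pi,\pi]_{A,\phi_0}\bigr)(\xi,\eta,\cdot)=0$ for every $\xi,\eta$, which is equivalent to the Maurer-Cartan equation.

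For $\graph\omega^\flat$ I would apply the previous case to the Jacobi bialgebroid $((A^*,X_0),(A,\phi_0))$, legitimate by Proposition \ref{Jacobi bilagebroid gyaku ok}: under the role swap $\graph\omega^\flat\subset A\oplus A^*$ is identified with the graph of $\omega^\sharp:A\to A^*$ in $A^*\oplus A$, while \eqref{kakko-tachi} ensures that $(\cdot,\cdot)_+$ and $[\![\cdot,\cdot]\!]$ are preserved, so the Maurer-Cartan equation translates into the stated one for $\omega$. The principal obstacle is the Cartan-type identification in the last step, since it requires careful tracking of how the $\phi_0$-Schouten bracket and the $X_0$-differential interact with $\pi^\sharp$; once that identity is secured, the remainder is organised bookkeeping.
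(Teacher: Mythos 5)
Your argument is correct: the isotropy check, the collapse of the $A^{*}$-component to $[\xi,\eta]_{A^{*}\!,X_0}+[\xi,\eta]_{\pi,\phi_{0}}$ via $(\pi^{\sharp}\xi+\xi,\pi^{\sharp}\eta+\eta)_-=-\langle\pi^{\sharp}\xi,\eta\rangle$, the use of (\ref{Jacobi structure bracket property}) to isolate $\tfrac{1}{2}[\pi,\pi]_{A,\phi_0}$, and the identification of the remaining four terms with $(d_{A^{*}\!,X_0}\pi)(\xi,\eta,\cdot)$ all check out (the last step follows from $\mathcal{L}_{\xi}^{A^{*}\!,X_0}Y=\mathcal{L}_{\xi}^{A^{*}}Y+\langle X_{0},\xi\rangle Y$ and $[\xi,\eta]_{A^{*}\!,X_0}=[\xi,\eta]_{A^{*}}$ on $1$-cosections), as does the reduction of the $\graph\omega^{\flat}$ case to the dual bialgebroid via (\ref{kakko-tachi}) and Proposition \ref{Jacobi bilagebroid gyaku ok}. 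The paper itself gives no proof of this statement --- it is quoted from \cite{NC} --- and your derivation is the standard one carried out there, so there is nothing further to compare.
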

 
\begin{rem}
A Dirac structure on a Jacobi bialgebroid $((A,0),(A^{*},0))$ in Example \ref{Lie bialgebroids} is called a {\it Dirac structure on a Lie bialgebroid $(A,A^{*})$} and a Dirac structure on a Jacobi algebroid $(A,0)$ is called a {\it Dirac structure on a Lie algebroid $A$}. Then (\ref{MC equation}) coincides with the Maurer-Cartan type equation for a Lie bialgebroid $(A,A^{*})$ introduced in \cite{LWX}. 
\end{rem}

  \begin{example}\label{standard case M-C}
 For any Jacobi algebroid $(A,\phi_{0})$, the Maurer-Cartan type equations for $(A,\phi_{0})$ are
\begin{align*}
[\pi ,\pi]_{A,\phi_0}=0,\quad d_{A,\phi_0}\omega=0.
\end{align*} 
The former means that $\pi $ in $\Gamma (\Lambda^{2}A)$ is a Jacobi structure on $(A,\phi_{0})$ and the latter means that $\omega $ in $\Gamma (\Lambda^{2}A^{*})$ is a $\phi_{0}$-presymplectic structure on $(A,\phi_{0})$.
 \end{example}

 \subsection{Relations}\label{Relations}
 
 Let $U, V$ and $W$ be sets. We call subsets $R$ and $R'$ of  $U\times V$ and $V\times W$ {\it relations}. Then the {\it decomposition} $R'\ast R$ of $R$ and $R'$, and the {\it inverse} $\overline{R}$ of $R$ are defined by
 \begin{align*}
R'\ast R&:=\{(u,w)\in U\times W\,|\, \exists v\in V, (u,v)\in R\  \mbox{and}\ (v,w)\in R'\},\\
\overline{R}&:=\{(v,u)\in V\times U\,|\, (u,v)\in R\}.
 \end{align*}
 Moreover we set
 \begin{align*}
R'\Diamond R:=\{(u,v,w)\in U\times V\times W\,|\, (u,v)\in R, (v,w)\in R'\}.
 \end{align*}
 We notice that $\overline{R'\ast R}=\overline{R}\,\ast \,\overline{R'}$. Let $\phi :U\rightarrow V$ and $\phi ':V\rightarrow W$ be maps. Then we obtain $\graph \phi'\ast \graph \phi=\graph (\phi'\circ \phi)$. It is clear that $\overline{\graph \phi}=\graph (\phi^{-1})$ if $\phi$ is invertible.
 
 Let $U$ and $V$ be vector spaces. Then the {\it dual} $R^*$ of a relation $R\subset U\times V$ is defined by
 \begin{align*}
R^*:=\{(\beta,\alpha)\in V^*\times U^*\,|\, \forall(u,v)\in R,\langle \alpha ,u\rangle=\langle \beta ,v\rangle \}.
 \end{align*}
 We notice that $\overline{R^*}=\overline{R}^\ast$. Let $\phi :U\rightarrow V$ be a linear map. Then it is clear that $(\graph \phi)^*=\graph (\phi^*)$, where $\phi^*:V^*\rightarrow U^*$ is the dual map of $\phi$.
 
 Let $U$ and $V$ be vector bundles over a manifold $M$ and $R\subset U\times V$ a relation. Then the relation $\underline{R}\subset \Gamma (U)\times \Gamma (V)$ induced by $R$ is defined by
 \begin{align*}
\underline{R}:=\{(X,Y)\in \Gamma (U)\times \Gamma (V)\,|\, \forall p\in M, (X_p,Y_p)\in R\}.
 \end{align*}
 We notice that $\underline{R}=\Gamma(R)$ if $R\subset U\times V$ is a vector bundle over $M$. It is clear that $(\underline{R})^*=\underline{R^*}$. For any bundle map $\phi:U\rightarrow V$, we get $\underline{\graph \phi}=\graph {\underline{\phi}}$. Here $\underline{\phi}:\Gamma (U)\rightarrow \Gamma (V)$ is the map induced by $\phi$. In this paper, we often omit underline and denote the induced relation and map by the same symbols.

We define the Nijenhuis torsion of relations on a Lie algebroid.
 
 \begin{defn}[\cite{K}]\label{torsion and Nijenhuis}
Let $(A,[\cdot,\cdot]_A,\rho_A)$ be a Lie algebroid over $M$ and $R\subset A\times A$ a relation. Then the {\it Nijenhuis torsion} of $R$ is a function $\mathcal{T}_{R}:\underline{R}\times \underline{R}\times (\underline{R}^*\Diamond \underline{R}^*)\rightarrow \mathbb{R}$ defined by
 \begin{align*}
\mathcal{T}_{R}&((X_1,Y_1),(X_2,Y_2),(\alpha,\beta,\gamma))\\
             &:=\langle \alpha,[Y_1,Y_2]_A\rangle -\langle \beta,[Y_1,X_2]_A+[X_1,Y_2]_A\rangle+\langle \gamma,[X_1,X_2]_A\rangle
 \end{align*}
for all $(X_1,Y_1),(X_2,Y_2)$ in $\underline{R}$ and $(\alpha,\beta,\gamma)$ in $(\underline{R}^*\Diamond \underline{R}^*)$. A relation $R$ is {\it Nijenhuis} if $\mathcal{T}_{R}$ vanishes. 
 \end{defn}
\noindent 
 It follows easily that $\mathcal{T}_{R}=\mathcal{T}_{\overline{R}}$. A bundle map $N:A\rightarrow A$ over $M$ is a {\it Nijenhuis structure} on $A$ if the induced $(1,1)$-tensor $N$ in $\Gamma (A^{*}\otimes A)$ satisfies that
 \begin{align*}
\mathcal{T}_{N}(X,Y):=[NX,NY]_{A}-N[NX,Y]_{A}-N[X,NY]_{A}+N^2[X,Y]_{A}
 \end{align*}
vanishes for any $X$ and $Y$ in $\Gamma (A)$. A bundle map $N:A\rightarrow A$ over $M$ is a Nijenhuis structure on $A$ if and only if $\graph N$ is a Nijenhuis relation on $A$, i.e., $\mathcal{T}_{N}=0$ is equivalent with $\mathcal{T}_{\graph N}=0$. Hence Nijenhuis relations are a generalization of Nijenhuis structures.

 \subsection{Dirac pairs on Lie bialgebroids}\label{Dirac pairs on Lie bialgebroids}

For any relations $L$ and $L'\subset A\times A^{*}$, where $A$ and $A^{*}$ are vector bundles in duality over $M$, we set
 \begin{align}
\mathrm{N}_{L,L'}:=\overline{L}*L'.
\end{align}
Then we get $\mathrm{N}_{L,L'}=\overline{\mathrm{N}_{L',L}}$.
 
 \begin{defn}[\cite{K}]
 Let $(A,A^{*})$ be a Lie bialgebroid over $M$, $L$ and $L'$ Dirac structures on $(A,A^{*})$. Then $(L,L')$ is a {\it Dirac pair on} $(A, A^*)$ if $N_{L,L'}$ is a Nijenhuis relation. If $L$ and $L'$ are Dirac structures on $A$, then a pair $(L,L')$ is called {\it Dirac pair on} $A$. 
 \end{defn}
 
 \noindent
 Since $\mathcal{T}_{\mathrm{N}_{L,L'}}=\mathcal{T}_{\overline{\mathrm{N}_{L'\!,L}}}=\mathcal{T}_{\mathrm{N}_{L'\!,L}}$, if $(L,L')$ is a Dirac pair, then so is $(L',L)$.
 
 For any $2$-sections $\pi$ and $\pi'$ in $\Gamma (\Lambda^{2}A)$, the following holds.

 \begin{lemma}[\cite{K}]\label{2-section Nijenhuis torsion}
For any $\pi,\pi'$ in $\Gamma (\Lambda^2A)$, the Nijenhuis torsion of $\mathrm{N}_{L,L'}$, where $L:=\overline{\graph \pi^\sharp}$ and $L:=\overline{\graph \pi^{\prime\sharp}}$, satisfies the following:
 \begin{align*}
\mathcal{T}_{\mathrm{N}_{L,L'}}&((\pi^{\prime\sharp}\xi_1,\pi^\sharp\xi_1),(\pi^{\prime\sharp}\xi_2,\pi^\sharp\xi_2),(\xi,\xi',\xi''))\\
&=[\pi,\pi]_{A}(\xi_1,\xi_2,\xi)+[\pi',\pi']_{A}(\xi_1,\xi_2,\xi'')-2[\pi,\pi']_{A}(\xi_1,\xi_2,\xi').
\end{align*}
 \end{lemma}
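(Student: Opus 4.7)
The plan is a direct computation in which the compatibility conditions characterizing $\underline{\mathrm{N}_{L,L'}}^\ast \Diamond \underline{\mathrm{N}_{L,L'}}^\ast$ exactly cancel the ``cross terms'' that appear after expanding the three Lie brackets in $\mathcal{T}_{\mathrm{N}_{L,L'}}$ via the Schouten-bracket formulas already established in the paper.

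First I would unpack $\mathrm{N}_{L,L'}$ explicitly. Since $L = \overline{\graph\pi^\sharp} = \{(\pi^\sharp\xi,\xi) : \xi \in A^*\}$ and $L' = \overline{\graph\pi'^\sharp} = \{(\pi'^\sharp\xi,\xi) : \xi \in A^*\}$, the definition $\mathrm{N}_{L,L'} = \overline{L}\ast L'$ immediately yields
\[
\mathrm{N}_{L,L'} = \{(\pi'^\sharp\xi,\ \pi^\sharp\xi) : \xi \in A^*\} \subset A \times A,
\]
so its sections are parametrized by $\Gamma(A^*)$ through $\xi \mapsto (\pi'^\sharp\xi,\pi^\sharp\xi)$, which is the form of $(X_i,Y_i)$ assumed by the lemma. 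Dually, applying the definition of $R^\ast$ and rewriting each pairing via the skew-symmetry $\langle \alpha,\pi^\sharp\xi\rangle = -\langle \pi^\sharp\alpha,\xi\rangle$ (and analogously for $\pi'$), one reads off that $(\xi,\xi',\xi'')\in \underline{\mathrm{N}_{L,L'}}^\ast \Diamond \underline{\mathrm{N}_{L,L'}}^\ast$ precisely when
\[
\pi^\sharp\xi = \pi'^\sharp\xi', \qquad \pi^\sharp\xi' = \pi'^\sharp\xi''.
\]

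Next I would expand $[Y_1,Y_2]_A$, $[Y_1,X_2]_A + [X_1,Y_2]_A$, and $[X_1,X_2]_A$ by applying (\ref{Jacobi structure bracket property}) separately to $\pi$ and $\pi'$ (with $\phi_0=0$) and (\ref{Jacobi structure no wa}) to the pair $(\pi,\pi')$, giving
\begin{align*}
[\pi^\sharp\xi_1,\pi^\sharp\xi_2]_A &= \tfrac{1}{2}[\pi,\pi]_A(\xi_1,\xi_2,\cdot) + \pi^\sharp[\xi_1,\xi_2]_\pi,\\
[\pi'^\sharp\xi_1,\pi'^\sharp\xi_2]_A &= \tfrac{1}{2}[\pi',\pi']_A(\xi_1,\xi_2,\cdot) + \pi'^\sharp[\xi_1,\xi_2]_{\pi'},\\
[\pi^\sharp\xi_1,\pi'^\sharp\xi_2]_A + [\pi'^\sharp\xi_1,\pi^\sharp\xi_2]_A &= [\pi,\pi']_A(\xi_1,\xi_2,\cdot) + \pi^\sharp[\xi_1,\xi_2]_{\pi'} + \pi'^\sharp[\xi_1,\xi_2]_\pi.
\end{align*}
Pairing with $\xi$, $\xi'$, $\xi''$ as prescribed by Definition \ref{torsion and Nijenhuis} produces the three Schouten-bracket terms of the claimed identity, together with four residual terms $\langle \xi,\pi^\sharp[\xi_1,\xi_2]_\pi\rangle$, $-\langle \xi',\pi^\sharp[\xi_1,\xi_2]_{\pi'}\rangle$, $-\langle \xi',\pi'^\sharp[\xi_1,\xi_2]_\pi\rangle$, $\langle \xi'',\pi'^\sharp[\xi_1,\xi_2]_{\pi'}\rangle$. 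Using the skew-symmetry identity once more, these four regroup as
\[
\bigl\langle \pi'^\sharp\xi' - \pi^\sharp\xi,\ [\xi_1,\xi_2]_\pi\bigr\rangle + \bigl\langle \pi^\sharp\xi' - \pi'^\sharp\xi'',\ [\xi_1,\xi_2]_{\pi'}\bigr\rangle,
\]
which vanishes identically by the two constraints displayed above.

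The main obstacle is not conceptual but is the sign and factor bookkeeping: keeping consistent the convention $\langle \pi^\sharp\xi,\eta\rangle = \pi(\xi,\eta)$ and its consequence $\langle \xi,\pi^\sharp\eta\rangle = -\langle \pi^\sharp\xi,\eta\rangle$, and recognizing that the two matching conditions characterizing $\underline{R}^\ast\Diamond \underline{R}^\ast$ are exactly what is needed to kill the four cross terms. Once that cancellation is carried out, what remains is the combination of Schouten brackets stated in the lemma.
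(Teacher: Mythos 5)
Your proposal is correct and follows exactly the route the paper indicates for this lemma (which it quotes from \cite{K} and reuses verbatim to justify Lemma \ref{pi and pi' Nijenhuis torsion}): identify $\mathrm{N}_{L,L'}=\{(\pi^{\prime\sharp}\xi,\pi^{\sharp}\xi)\}$ and the dual constraints $\pi^{\sharp}\xi=\pi^{\prime\sharp}\xi'$, $\pi^{\sharp}\xi'=\pi^{\prime\sharp}\xi''$ explicitly, expand the three brackets via (\ref{Jacobi structure bracket property}) and (\ref{Jacobi structure no wa}), and cancel the four residual terms using precisely those constraints. The only caveat is normalization: taking your displayed expansions literally, the torsion comes out as one half of the stated right-hand side, a constant-factor discrepancy inherited from the conventions of \cite{K} that is immaterial because the lemma is only ever applied through the simultaneous vanishing of both sides.
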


Let $A$ be a Lie algebroid over $M$. Then a $2$-section on $A$ is Poisson if and only if its graph is a Dirac structure on $A$, i.e., $[\pi,\pi]_{A}=0$. A pair $(\pi,\pi')$ of two Poisson structures on $A$ is a {\it Poisson pair} if a pair $(\overline{\graph \pi^\sharp},\overline{\graph \pi^{\prime\sharp}})$ is a Dirac pair on $A$. A Poisson pair $(\pi,\pi^{\prime})$ is {\it non-degenerate}  if both $\pi $ and $\pi'$ are non-degenerate. Two Poisson structures $\pi$ and $\pi'$ on $A$ are compatible, i.e., $\pi+\pi'$ is also Poisson if and only if $[\pi,\pi']_A=0$. Therefore $(\pi,\pi')$ is a Poisson pair by Lemma \ref{2-section Nijenhuis torsion}. We call a pair $(\pi,\pi')$ cosisting of compatible Poisson structures a {\it Hamiltonian pair}. Conversely, if a Poisson pair $(\pi,\pi')$ satisfies
\begin{align}\label{compatible Poisson condition}
A^{*}=(\pi^{\sharp})^{-1}(\mathrm{Im}\,\pi^{\prime\sharp})\cap(\pi^{\prime\sharp})^{-1}(\mathrm{Im}\,\pi^{\sharp}),
 \end{align}
then $(\pi,\pi')$ is a Hamiltonian pair. In particular, since a non-degenerate Poisson pair $(\pi,\pi')$ satisfies (\ref{compatible Poisson condition}), $(\pi,\pi')$ is a Hamiltonian pair. It is well known that Hamiltonian pairs themselves are induced by several compatible structures, for example, Poisson-Nijenhuis structures \cite{KMa}, \cite{V}, pairs of Poisson structures and their complementary $2$-forms \cite{V2} and so on.

A $2$-cosection on $A$ is presymplectic, i.e., it is $d_A$-closed, if and only if its graph is a Dirac structure on $A$. A pair $(\omega,\omega')$ of two presymplectic structures on $A$ is a {\it presymplectic pair} if a pair $(\graph \omega^\flat,\graph \omega^{\prime\flat})$ is a Dirac pair. A presymplectic pair $(\omega,\omega')$ is {\it symplectic pair} if both $\omega$ and $\omega'$ are symplectic. The following proposition for Poisson and presymplectic pairs hold.


 \begin{prop}[\cite{K}]\label{presymplectic pair property}
Symplectic pairs are in one-to-one correspondence with non-degenerate Poisson pairs.
 \end{prop}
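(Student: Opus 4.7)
The plan is to use the standard one-to-one correspondence $\pi \leftrightarrow \omega$ between non-degenerate Poisson and symplectic structures on $A$ (the $\phi_0 = 0$ specialisation of the correspondence recalled just before Example \ref{contact example}), and to show that under this bijection the Nijenhuis relations defining the Dirac-pair conditions literally coincide. Concretely, given non-degenerate $\pi_k$ in $\Gamma(\Lambda^2 A)$, set $\omega_k^\flat := -(\pi_k^\sharp)^{-1}$ for $k=1,2$; then $\pi_k$ is Poisson if and only if $\omega_k$ is symplectic, so it remains only to show that the pair condition is preserved.

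The main step is to compute the two Nijenhuis relations explicitly. For $L_k := \overline{\graph \pi_k^\sharp}$, unwinding the definition $\mathrm{N}_{L_1,L_2} = \overline{L_1} \ast L_2$ as a composition of relations shows that its fibres consist of the pairs $(\pi_2^\sharp \xi, \pi_1^\sharp \xi)$ for $\xi \in A^*$, which, since $\pi_2^\sharp$ is invertible, is precisely $\graph(\pi_1^\sharp \circ (\pi_2^\sharp)^{-1})$ as a subbundle of $A\times A$. The analogous calculation for $L_k' := \graph \omega_k^\flat$ yields $\mathrm{N}_{L_1',L_2'} = \graph((\omega_1^\flat)^{-1} \circ \omega_2^\flat)$. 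Substituting $\omega_k^\flat = -(\pi_k^\sharp)^{-1}$ gives
\[
(\omega_1^\flat)^{-1} \circ \omega_2^\flat = (-\pi_1^\sharp) \circ (-(\pi_2^\sharp)^{-1}) = \pi_1^\sharp \circ (\pi_2^\sharp)^{-1},
\]
so the two endomorphisms of $A$ coincide, hence so do the relations $\mathrm{N}_{L_1,L_2}$ and $\mathrm{N}_{L_1',L_2'}$ as subbundles of $A \times A$, and in particular their Nijenhuis torsions agree.

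Since $\pi_k$ Poisson (resp.\ $\omega_k$ symplectic) already guarantees, via the Maurer--Cartan equation (\ref{MC equation}), that $\overline{\graph \pi_k^\sharp}$ (resp.\ $\graph \omega_k^\flat$) is a Dirac structure on $A$, the remaining Nijenhuis condition in the definition of a Dirac pair is literally the same on both sides. Therefore $(\pi_1, \pi_2)$ is a non-degenerate Poisson pair if and only if $(\omega_1, \omega_2)$ is a symplectic pair, and $\pi_k \mapsto \omega_k$ realises the claimed bijection. The only real difficulty is bookkeeping the $\overline{(\cdot)}$ and $\graph$ conventions when composing under $\ast$ --- distinguishing $\graph \pi^\sharp$ from $\overline{\graph \pi^\sharp}$ and keeping track of which coordinate lies in $A$ and which in $A^*$ --- but once the direction of each relation is fixed, the identity $\pi_1^\sharp \circ (\pi_2^\sharp)^{-1} = (\omega_1^\flat)^{-1} \circ \omega_2^\flat$ makes the equivalence immediate.
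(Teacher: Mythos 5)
Your proof is correct: reducing both Dirac-pair conditions to the single Nijenhuis condition on the recursion operator, via $\mathrm{N}_{L_1,L_2}=\{(\pi_2^\sharp\xi,\pi_1^\sharp\xi)\}=\graph\bigl(\pi_1^\sharp\circ(\pi_2^\sharp)^{-1}\bigr)=\graph\bigl((\omega_1^\flat)^{-1}\circ\omega_2^\flat\bigr)=\mathrm{N}_{L_1',L_2'}$ together with the standard bijection $\omega_\pi^\flat=-(\pi^\sharp)^{-1}$, is exactly the intended argument; the same computation appears in the paper in Example \ref{cotan r2 times r} and in the proof of Theorem \ref{main 1}(ii). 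The paper itself cites this proposition from \cite{K} without reproducing a proof, so there is nothing further to compare.
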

 
 We define P$\Omega$- and $\Omega$N-structures on a Lie algebroid.
 
\begin{defn}[\cite{K}]
 Let $A$ be a Lie algebroid over $M$, $\pi$ a $2$-section on $A$ and $\omega$ a $2$-cosection on $A$. Then a pair $(\pi,\omega)$ is a {\it P$\Omega$-structure} on $A$ if $\pi$ is Poisson and both $\omega$ and $\omega_{N}$ are $d_{A}$-closed, where $N:=\pi^{\sharp}\circ \omega^{\flat}$ and $\omega _{N}$ is a $2$-cosection characterized by $\omega_{N}^{\flat}=\omega^{\flat}\circ N$.  
\end{defn}

\begin{defn}[\cite{K}]
 Let $A$ be a Lie algebroid over $M$, $\omega$ a $2$-cosection on $A$ and $N$ a $(1,1)$-tensor field on $A$. Then a pair $(\omega,N)$ is an {\it $\Omega$N-structure} on $A$ if $\omega^{\flat}\circ N=N^{*}\circ \omega^{\flat}$, $N$ is Nijenhuis, i.e., $\mathcal{T}_{N}=0$, and both $\omega$ and $\omega_{N}$ are $d_{A}$-closed, where $\omega _{N}$ is a $2$-cosection characterized by $\omega_{N}^{\flat}=\omega^{\flat}\circ N$. We can also define a {\it weak $\Omega$N-structure} on $A$ by replacing $\mathcal{T}_{N}=0$ with $\omega^{\flat}(\mathcal{T}_{N}(X,Y))=0$ for any $X$ and $Y$ in $\Gamma (A)$.
\end{defn}

\noindent
For a weak $\Omega$N-structure $(\omega, N)$ on $A$, we set
\begin{align*}
\mathrm{N}_{(\omega,N)}^{+}:=\{(\omega^{\flat}X,\omega^{\flat}_{N}X)\,|\, X\in A\}\subset \mathrm{N}_{L,L'}^{*},
\end{align*}
where $L:=\graph\omega^{\flat}$ and $L':=\graph\omega_{N}^{\flat}$.

These structures are characterized in terms of Dirac pairs on Lie algebroids.

 \begin{prop}[\cite{K}]\label{POmega to Dirac}
 Let $A$ be a Lie algebroid over $M$, $\pi$ a Poisson structure on $A$ and $\omega$ a presymplectic structure on $A$.
\begin{enumerate}
\item[{\rm (i)}] If a pair $(\pi,\omega )$ is a P$\Omega$-structure on $A$, then a pair $(\overline{\graph \pi^{\sharp}},\graph\omega^{\flat})$ is a Dirac pair on $A$;
\item[{\rm (ii)}] Conversely, if $(\overline{\graph \pi^{\sharp}},\graph\omega^{\flat})$ is a Dirac pair on $A$, and if $\pi $ is nondegenerate, then a pair $(\pi,\omega )$ is a P$\Omega$-structure on $A$.
\end{enumerate}
 \end{prop}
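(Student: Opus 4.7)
The plan is to reduce the Dirac pair condition to a Nijenhuis condition on the tensor $N:=\pi^{\sharp}\circ\omega^{\flat}$, and then to exploit an identity expressing $\mathcal{T}_{N}$ in terms of $d_{A}\omega_{N}$ that holds under the Poisson and closedness hypotheses.

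First I would check that $L:=\overline{\graph\pi^{\sharp}}$ and $L':=\graph\omega^{\flat}$ are themselves Dirac structures on $A$ in both settings of the proposition: since $\pi$ is Poisson and $\omega$ is presymplectic, $[\pi,\pi]_{A}=0$ and $d_{A}\omega=0$, which are precisely the Maurer--Cartan equations of Example~\ref{standard case M-C}. Next I would compute $\mathrm{N}_{L,L'}$. Using the composition rule $\graph\phi'\ast\graph\phi=\graph(\phi'\circ\phi)$ from Subsection~\ref{Relations}, one has
\[\mathrm{N}_{L,L'}=\overline{L}\ast L'=\graph\pi^{\sharp}\ast\graph\omega^{\flat}=\graph(\pi^{\sharp}\circ\omega^{\flat})=\graph N.\]
By the equivalence recorded at the end of Subsection~\ref{Relations}, $\graph N$ is Nijenhuis as a relation if and only if $N$ is a Nijenhuis structure on $A$. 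Thus $(L,L')$ is a Dirac pair on $A$ precisely when $\mathcal{T}_{N}=0$.

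The heart of the proof is then the identity
\[\mathcal{T}_{N}(X,Y)=\pi^{\sharp}\bigl(\iota_{Y}\iota_{X}d_{A}\omega_{N}\bigr),\qquad X,Y\in\Gamma(A),\]
valid whenever $[\pi,\pi]_{A}=0$ and $d_{A}\omega=0$. I would prove it by expanding the left-hand side via the definition of $\mathcal{T}_{N}$, unfolding $d_{A}\omega_{N}$ by the Cartan formula, rewriting the resulting terms using the automatic compatibility $\omega^{\flat}\circ N=N^{\ast}\circ\omega^{\flat}$ (which follows from $(\pi^{\sharp})^{\ast}=-\pi^{\sharp}$ and $(\omega^{\flat})^{\ast}=-\omega^{\flat}$), and finally using $[\pi,\pi]_{A}=0$ and $d_{A}\omega=0$ to cancel the extraneous terms. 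This direct but laborious computation is the main obstacle of the proof.

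Once the identity is in hand, both parts follow at once. For (i), the P$\Omega$-hypothesis gives $d_{A}\omega_{N}=0$, whence $\mathcal{T}_{N}=\pi^{\sharp}(0)=0$, and $(L,L')$ is a Dirac pair. For (ii), nondegeneracy of $\pi$ makes $\pi^{\sharp}:A^{\ast}\to A$ an isomorphism, so $\mathcal{T}_{N}=0$ combined with the identity forces $\iota_{Y}\iota_{X}d_{A}\omega_{N}=0$ for all $X,Y\in\Gamma(A)$; since $d_{A}\omega_{N}$ is a $3$-cosection, this yields $d_{A}\omega_{N}=0$, completing the verification that $(\pi,\omega)$ is a P$\Omega$-structure.
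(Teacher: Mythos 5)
Your argument is correct and follows the same route as the source \cite{K} that the paper cites for this proposition (the paper itself supplies no proof): identify $\mathrm{N}_{L,L'}=\overline{L}\ast L'=\graph\pi^{\sharp}\ast\graph\omega^{\flat}=\graph(\pi^{\sharp}\circ\omega^{\flat})$ via the composition rule of Subsection~\ref{Relations}, use the stated equivalence between $\graph N$ being a Nijenhuis relation and $N$ being a Nijenhuis tensor, and then invoke the Magri--Morosi-type identity relating $\mathcal{T}_{N}$ to $d_{A}\omega_{N}$; this is also exactly the mechanism the paper reuses in the proof of Theorem~\ref{main 1}(ii). The one thing you do not actually establish is the central identity $\mathcal{T}_{N}(X,Y)=\pi^{\sharp}\left(\iota_{Y}\iota_{X}d_{A}\omega_{N}\right)$ under $[\pi,\pi]_{A}=0$ and $d_{A}\omega=0$, which you correctly flag as the main obstacle: since the whole computational content of both directions of the proposition is concentrated there, a complete proof would have to carry out that (classical, if laborious) verification rather than assert it. The surrounding steps --- the automatic compatibility $\omega^{\flat}\circ N=N^{*}\circ\omega^{\flat}$, and the deduction of (i) from $d_{A}\omega_{N}=0$ and of (ii) from the injectivity of $\pi^{\sharp}$ --- are all sound.
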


 \begin{prop}[\cite{K}]\label{weak OmegaN to Dirac}
 Let $A$ be a Lie algebroid over $M$, $\omega$ a presymplectic structure on $A$ and $N$ a $(1,1)$-tensor field on $A$.
\begin{enumerate}
\item[{\rm (i)}] If a pair $(\omega ,N)$ is an $\Omega$N-structure on $A$, and if $\mathrm{N}_{(\omega,N)}^{+}=\mathrm{N}_{L,L'}^{*}$, where $L:=\graph\omega^{\flat}$ and $L':=\graph\omega_{N}^{\flat}$, then a pair $(L,L')$ is a Dirac pair on $A$;
\item[{\rm (ii)}] Conversely, if $(\graph \omega^{\flat},\graph\omega_{N}^{\flat})$ is a Dirac pair on $A$, then a pair $(\omega ,N)$ is a weak $\Omega$N-structure on $A$.
\end{enumerate}
 \end{prop}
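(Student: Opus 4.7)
The plan is to analyze the Nijenhuis torsion of the relation $R := \mathrm{N}_{L,L'} = \overline{L}\ast L'$ pointwise. Since $L = \graph\omega^\flat$ and $L' = \graph\omega_N^\flat$, a direct unpacking gives $(X,Y) \in R$ iff $\omega^\flat(Y - NX) = 0$, i.e., $Y = NX + K$ for some $K \in \ker\omega^\flat$, and $(\alpha,\beta) \in R^*$ iff $\alpha$ annihilates $\ker\omega^\flat$ (equivalently $\alpha \in \mathrm{Im}\,\omega^\flat$) and $\beta = N^*\alpha$. Under the compatibility $\omega^\flat N = N^*\omega^\flat$, the dual relation $R^*$ coincides pointwise with $\mathrm{N}_{(\omega,N)}^{+}$, so elements of $\underline{R^*}$ admit the local form $(\omega^\flat U,\omega^\flat NU)$ with $U \in \Gamma(A)$.

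For part (i), the hypotheses $d_A\omega = 0$ and $d_A\omega_N = 0$ make $L$ and $L'$ Dirac structures by the Maurer--Cartan characterization (Example \ref{standard case M-C} together with the preceding theorem). I then verify $\mathcal{T}_R = 0$ directly: take $(X_i,Y_i) = (X_i, NX_i + K_i) \in \underline{R}$ with $\omega^\flat K_i = 0$ and $(\alpha,\beta,\gamma) = (\omega^\flat U,\omega^\flat NU,\omega^\flat N^2 U)$, substitute into Definition \ref{torsion and Nijenhuis}, and split the result into $K$-independent and $K$-dependent parts. Using $\omega(U,NV) = \omega(NU,V)$ together with the Nijenhuis identity $[NX_1,NX_2] = N[NX_1,X_2] + N[X_1,NX_2] - N^2[X_1,X_2] + \mathcal{T}_N(X_1,X_2)$, the $K$-independent part collapses to $\omega(U,\mathcal{T}_N(X_1,X_2))$, which vanishes by $\mathcal{T}_N = 0$. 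For the $K$-dependent part, applying $d_A\omega = 0$ to the triples $(U, NX_j, K_i)$ and $d_A\omega_N = 0$ to $(U, X_j, K_i)$, together with $\omega(\cdot,K_i) = 0$, shows that the cross terms $\omega(U,[NX_j,K_i]) - \omega(NU,[X_j,K_i])$ pair off to zero, and $\omega(U,[K_1,K_2]) = 0$ follows from $d_A\omega = 0$ evaluated on $(U,K_1,K_2)$.

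For part (ii), the Dirac condition on $L$ and $L'$ forces $d_A\omega = 0$ and $d_A\omega_N = 0$, while $\omega^\flat N = N^*\omega^\flat$ is built into the well-definedness of $\omega_N^\flat$ as a skew map. To extract the weak $\Omega N$-condition $\omega^\flat\mathcal{T}_N = 0$, I specialize $\mathcal{T}_R = 0$ to $K_i = 0$, i.e., to $(X_i, NX_i) \in \underline{R}$, with the same $(\omega^\flat U,\omega^\flat NU,\omega^\flat N^2 U) \in \underline{R^*}\Diamond\underline{R^*}$. The $K$-independent calculation of part (i) then reduces $\mathcal{T}_R$ cleanly to $\omega(U,\mathcal{T}_N(X_1,X_2)) = \langle \omega^\flat\mathcal{T}_N(X_1,X_2), U\rangle$, which vanishes for all local $U$, yielding $\omega^\flat\mathcal{T}_N(X_1,X_2) = 0$.

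The main obstacle is the $K$-dependent bookkeeping in part (i): each term $\omega(U,[NX_j,K_i])$ and $\omega(NU,[X_j,K_i])$ is individually nonzero, and the pairwise cancellation relies crucially on the closedness identities for $\omega$ and $\omega_N$ producing the \emph{same} expression in anchor-derivatives and brackets $[U,K_i]$. It is precisely at this step that the compatibility $\omega^\flat N = N^*\omega^\flat$ is used most essentially, tying together the Nijenhuis condition on $N$ with the closedness of both $\omega$ and $\omega_N$.
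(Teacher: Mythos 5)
The paper does not prove this proposition at all: it is quoted verbatim from Kosmann-Schwarzbach \cite{K} as background in Section \ref{Dirac pairs on Lie bialgebroids}, so there is no internal proof to compare against. Your argument is a correct, self-contained reconstruction, and it is essentially the computation one would find in \cite{K}: identify $\underline{R}$ as pairs $(X,NX+K)$ with $K\in\ker\omega^{\flat}$, identify $\underline{R}^{*}\Diamond\underline{R}^{*}$ with triples $(\omega^{\flat}U,\omega^{\flat}NU,\omega^{\flat}N^{2}U)$, and observe that the torsion splits into $\omega(U,\mathcal{T}_{N}(X_1,X_2))$ plus $K$-dependent terms killed by $d_{A}\omega=0$, $d_{A}\omega_{N}=0$ and $\omega(\cdot,K_i)=0$. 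I checked the cancellations: evaluating $d_{A}\omega$ on $(U,NX_j,K_i)$ and $d_{A}\omega_{N}$ on $(U,X_j,K_i)$ both yield $\rho(K_i)\omega(U,NX_j)+\omega([U,K_i],NX_j)$ for the respective cross terms, so they pair off as you claim, and $d_{A}\omega(U,K_1,K_2)=0$ gives $\omega(U,[K_1,K_2])=0$. Part (ii) is also fine: $(\omega^{\flat}U,\omega^{\flat}NU,\omega^{\flat}N^{2}U)$ always lies in $\underline{R}^{*}\Diamond\underline{R}^{*}$ (only the inclusion $\mathrm{N}^{+}_{(\omega,N)}\subset\mathrm{N}^{*}_{L,L'}$ is needed there), so specializing to $K_i=0$ extracts $\omega^{\flat}\mathcal{T}_{N}=0$.

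One small caution: you assert that $R^{*}$ coincides pointwise with $\mathrm{N}^{+}_{(\omega,N)}$ as if this were automatic from the compatibility $\omega^{\flat}N=N^{*}\omega^{\flat}$. Fiberwise this is linear algebra ($(\ker\omega^{\flat})^{\circ}=\mathrm{Im}\,\omega^{\flat}$), but at the level of sections a covector field taking values in $\mathrm{Im}\,\omega^{\flat}$ need not be $\omega^{\flat}U$ for a smooth section $U$ when $\omega$ has non-constant rank; this is precisely why the hypothesis $\mathrm{N}^{+}_{(\omega,N)}=\mathrm{N}^{*}_{L,L'}$ appears in part (i) rather than being derived. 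Since you invoke that hypothesis anyway, nothing breaks, but the phrasing "coincides pointwise, so elements of $\underline{R^{*}}$ admit the local form" should be replaced by an appeal to the stated assumption.
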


\section{Dirac pairs on Jacobi bialgebroids}\label{Dirac pairs on Jacobi bialgebroids}

In this section, 
we generalize Dirac pairs on a Lie bialgebroid and introduce Dirac pairs on a Jacobi bialgebroid. We prove that similar properties for Dirac pairs on a Lie bialgebroid also hold for them on a Jacobi bialgebroid.
 
 We start with the definition of Dirac pairs on Jacobi bialgebroids. This is defined as is the case on Lie bialgebroids.
 
 \begin{defn}
 Let $((A,\phi_{0}),(A^{*},X_{0}))$ be a Jacobi bialgebroid over $M$, $L$ and $L'$ Dirac structures on $((A,\phi_{0}),(A^{*},X_{0}))$. Then $(L,L')$ is a {\it Dirac pair on} $((A,\phi_{0}),(A^{*},X_{0}))$ if $\mathrm{N}_{L,L'}$ is a Nijenhuis relation. If $L$ and $L'$ are Dirac structures on $(A,\phi_{0})$, then a pair $(L,L')$ is called {\it Dirac pair on} $(A,\phi_{0})$.
 \end{defn}
 
 \noindent
  Since $\mathcal{T}_{\mathrm{N}_{L,L'}}=\mathcal{T}_{\overline{\mathrm{N}_{L',L}}}=\mathcal{T}_{\mathrm{N}_{L',L}}$, if $(L,L')$ is a Dirac pair, then so is $(L',L)$.
 
 As on Lie bialgebroids, we obtain the following property of Nijenhuis relations on Jacobi bialgebroids.
 
  \begin{lemma}\label{pi and pi' Nijenhuis torsion}
Let $(A,\phi_{0})$ be a Jacobi algebroid. For any $\pi,\pi'$ in $\Gamma (\Lambda^2A)$, the Nijenhuis torsion of $\mathrm{N}_{L,L'}$, where $L:=\overline{\graph \pi^\sharp}$ and $L:=\overline{\graph \pi^{\prime\sharp}}$, satisfies the following:
\begin{align*}
\mathcal{T}_{\mathrm{N}_{L,L'}}&((\pi^{\prime\sharp}\xi_1,\pi^\sharp\xi_1),(\pi^{\prime\sharp}\xi_2,\pi^\sharp\xi_2),(\xi,\xi'\xi''))\\
&=[\pi,\pi]_{A,\phi_0}(\xi_1,\xi_2,\xi)+[\pi',\pi']_{A,\phi_0}(\xi_1,\xi_2,\xi'')-2[\pi,\pi']_{A,\phi_0}(\xi_1,\xi_2,\xi').
\end{align*}
 \end{lemma}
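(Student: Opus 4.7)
The plan is to reduce the statement to the Lie-algebroid case (Lemma \ref{2-section Nijenhuis torsion}) and check that the correction terms vanish. Since the Nijenhuis torsion in Definition \ref{torsion and Nijenhuis} depends only on the underlying Lie bracket $[\cdot,\cdot]_A$ and not on $\phi_0$, Lemma \ref{2-section Nijenhuis torsion} applies verbatim to the Jacobi algebroid $(A,\phi_0)$ and yields
\begin{align*}
\mathcal{T}_{\mathrm{N}_{L,L'}}&((\pi^{\prime\sharp}\xi_1,\pi^\sharp\xi_1),(\pi^{\prime\sharp}\xi_2,\pi^\sharp\xi_2),(\xi,\xi',\xi''))\\
&=[\pi,\pi]_A(\xi_1,\xi_2,\xi)+[\pi',\pi']_A(\xi_1,\xi_2,\xi'')-2[\pi,\pi']_A(\xi_1,\xi_2,\xi').
\end{align*}
It therefore suffices to show that replacing each Schouten bracket by the corresponding $\phi_0$-Schouten bracket on the right-hand side leaves the expression unchanged whenever $(\xi,\xi',\xi'')\in\mathrm{N}_{L,L'}^{*}\Diamond\mathrm{N}_{L,L'}^{*}$.

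Next, I expand using the definition of the $\phi_0$-Schouten bracket on bivectors, which gives $[\pi,\pi]_{A,\phi_0}=[\pi,\pi]_A+2\pi\wedge\iota_{\phi_0}\pi$, $[\pi',\pi']_{A,\phi_0}=[\pi',\pi']_A+2\pi'\wedge\iota_{\phi_0}\pi'$, and $[\pi,\pi']_{A,\phi_0}=[\pi,\pi']_A+\pi\wedge\iota_{\phi_0}\pi'+\pi'\wedge\iota_{\phi_0}\pi$. Using $\iota_{\phi_0}\pi=\pi^\sharp\phi_0$ and likewise for $\pi'$, the full correction to be shown to vanish is
\begin{align*}
2(\pi\wedge\pi^\sharp\phi_0)&(\xi_1,\xi_2,\xi)+2(\pi'\wedge\pi^{\prime\sharp}\phi_0)(\xi_1,\xi_2,\xi'')\\
&-2(\pi\wedge\pi^{\prime\sharp}\phi_0+\pi'\wedge\pi^\sharp\phi_0)(\xi_1,\xi_2,\xi').
\end{align*}

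Finally, I decode the diamond-composition constraint. Unpacking the definitions of the dual and the composition of relations, $\mathrm{N}_{L,L'}=\{(\pi^{\prime\sharp}\eta,\pi^\sharp\eta)\mid\eta\in A^*\}$, and so $(\xi,\xi',\xi'')\in\mathrm{N}_{L,L'}^{*}\Diamond\mathrm{N}_{L,L'}^{*}$ is equivalent to the two identities $\pi^\sharp\xi=\pi^{\prime\sharp}\xi'$ and $\pi^\sharp\xi'=\pi^{\prime\sharp}\xi''$. Pairing against an arbitrary $\eta\in A^*$ yields the symmetries $\pi(\eta,\xi)=\pi'(\eta,\xi')$ and $\pi'(\eta,\xi'')=\pi(\eta,\xi')$, used in particular for $\eta=\phi_0,\xi_1,\xi_2$. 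Expanding each of the four wedge products above via the standard formula $(\mu\wedge v)(\alpha,\beta,\gamma)=\mu(\alpha,\beta)\langle v,\gamma\rangle-\mu(\alpha,\gamma)\langle v,\beta\rangle+\mu(\beta,\gamma)\langle v,\alpha\rangle$ and then using the symmetries to replace every occurrence of $\xi$ or $\xi''$ by the corresponding expression in $\xi'$, the six monomials produced by the first two wedge terms match, one-for-one, the six monomials of the subtracted expression, and the correction collapses to zero. The main obstacle is purely the bookkeeping of pairing these six monomials against each other correctly; there is no conceptual step beyond Lemma \ref{2-section Nijenhuis torsion} and the explicit form of the diamond constraint.
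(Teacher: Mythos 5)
Your proposal is correct, and it takes a genuinely different route from the paper. The paper's own proof is a one-line reduction of a different kind: it reruns the computation behind Lemma \ref{2-section Nijenhuis torsion} from scratch, substituting the identity (\ref{Jacobi structure bracket property}), $\tfrac{1}{2}[\pi,\pi]_{A,\phi_0}(\xi,\eta,\cdot)=[\pi^\sharp\xi,\pi^\sharp\eta]_A-\pi^\sharp[\xi,\eta]_{\pi,\phi_0}$, for its $\phi_0=0$ counterpart. You instead invoke Lemma \ref{2-section Nijenhuis torsion} as a black box --- legitimately, since $\mathcal{T}_{\mathrm{N}_{L,L'}}$ only sees the underlying Lie bracket $[\cdot,\cdot]_A$ and the dual relation $\mathrm{N}_{L,L'}^{*}$ only sees the pairing --- and then show that the discrepancy $[\cdot,\cdot]_{A,\phi_0}-[\cdot,\cdot]_A$, which for bivectors is $D_1\wedge\iota_{\phi_0}D_2+\iota_{\phi_0}D_1\wedge D_2$, contributes nothing. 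I checked the cancellation: with $X:=\pi^\sharp\phi_0$, $X':=\pi^{\prime\sharp}\phi_0$ and the constraints $\pi^\sharp\xi=\pi^{\prime\sharp}\xi'$, $\pi^\sharp\xi'=\pi^{\prime\sharp}\xi''$, one gets $\langle X,\xi\rangle=\langle X',\xi'\rangle$, $\langle X',\xi''\rangle=\langle X,\xi'\rangle$, $\pi(\xi_i,\xi)=\pi'(\xi_i,\xi')$ and $\pi'(\xi_i,\xi'')=\pi(\xi_i,\xi')$, and the six monomials of $(\pi\wedge X)(\xi_1,\xi_2,\xi)+(\pi'\wedge X')(\xi_1,\xi_2,\xi'')$ do match those of $(\pi\wedge X'+\pi'\wedge X)(\xi_1,\xi_2,\xi')$ one for one. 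What each approach buys: the paper's version is terser but asks the reader to reproduce and adapt the computation from \cite{K}, whereas yours is self-contained given only the \emph{statement} of Lemma \ref{2-section Nijenhuis torsion} and makes explicit that the constraint $(\xi,\xi',\xi'')\in\mathrm{N}_{L,L'}^{*}\Diamond\mathrm{N}_{L,L'}^{*}$ is exactly what kills the $\phi_0$-correction --- the two displayed formulas (with $[\cdot,\cdot]_A$ and with $[\cdot,\cdot]_{A,\phi_0}$) agree only on that set. For a written version you would want to display the six-term cancellation rather than assert it, but the argument is complete.
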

\noindent
By using (\ref{Jacobi structure bracket property}), Lemma \ref{pi and pi' Nijenhuis torsion} can be shown exactly as Lemma \ref{2-section Nijenhuis torsion}.
 
 In general, Dirac pairs on a Jacobi bialgebroid and on a Lie bialgebroid have the following relationship. This is one of the main theorems in this paper.

 \begin{theorem}\label{main 1}
Let $((A,\phi_{0}),(A^{*},X_{0}))$ be a Jacobi bialgebroid over $M$, $\pi_{i}$ a 2-section on $A$ satisfying the Maurer-Cartan type equation and $\omega_{i}$ a 2-cosection on $A$ satisfying the Maurer-Cartan type equation ($i=1,2$). Let $(\tilde{A}_{\bar{\phi}_{0}},\tilde{A}^{*}_{\hat{X}_{0}})$ be the induced Lie bialgebroid over $M\times \mathbb{R}$ (Proposition \ref{Jacobi and Lie bilagebroid}). We set $\tilde{\pi}_{i}:=e^{-t}\pi_{i}$ in $\Gamma (\Lambda ^{2}\tilde{A})$ and $\tilde{\omega}_{i}:=e^{t}\omega_{i}$ in $\Gamma (\Lambda ^{2}\tilde{A}^{*})$, where $t$ is the standard coordinate in $\mathbb{R}$. Then:
  \begin{enumerate}
 \item[(i)] $(\overline{\graph\tilde{\pi}_{1}^{\sharp}},\overline{\graph\tilde{\pi}_{2}^{\sharp}})$ is a Dirac pair on $\left(\tilde{A}_{\bar{\phi}_{0}},\tilde{A}^{*}_{\hat{X}_{0}}\right)$ if and only if $(\overline{\graph\pi_{1}^{\sharp}},$ $\overline{\graph\pi_{2}^{\sharp}})$ is a Dirac pair on $((A,\phi_{0}),(A^{*},X_{0}))$;
 \item[(ii)] $(\overline{\graph\tilde{\pi}_{1}^{\sharp}},\graph\tilde{\omega}_{2}^{\flat})$ is a Dirac pair on $\left(\tilde{A}_{\bar{\phi}_{0}},\tilde{A}^{*}_{\hat{X}_{0}}\right)$ if and only if $(\overline{\graph\pi_{1}^{\sharp}},$ $\graph\omega_{2}^{\flat})$ is a Dirac pair on $((A,\phi_{0}),(A^{*},X_{0}))$;
 \item[(iii)] $(\graph\tilde{\omega}_{1}^{\flat},\graph\tilde{\omega}_{2}^{\flat})$ is a Dirac pair on $\left(\tilde{A}_{\bar{\phi}_{0}},\tilde{A}^{*}_{\hat{X}_{0}}\right)$ if and only if $(\graph\omega_{1}^{\flat},$ $\graph\omega_{2}^{\flat})$ is a Dirac pair on $((A,\phi_{0}),(A^{*},X_{0}))$.
  \end{enumerate}
 \end{theorem}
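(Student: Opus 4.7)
The plan is to reduce each of (i)--(iii) to two separate compatibilities between the Jacobi side and the Lie side: a matching of Dirac conditions (Maurer--Cartan equations) and a matching of Nijenhuis torsions. Both compatibilities will follow from \emph{scaling formulas} for the Schouten brackets and differentials under the rescalings $\tilde{\pi}_i=e^{-t}\pi_i$ and $\tilde{\omega}_i=e^{t}\omega_i$.

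First I would establish that $\graph \tilde{\pi}_i^{\sharp}$ is Dirac on $(\tilde{A}_{\bar{\phi}_0},\tilde{A}^{*}_{\hat{X}_0})$ iff $\graph \pi_i^{\sharp}$ is Dirac on $((A,\phi_0),(A^{*},X_0))$, by verifying the Maurer--Cartan identity
\begin{align*}
d_{\tilde{A}^{*}_{\hat{X}_0}}\tilde{\pi}_i+\tfrac{1}{2}[\tilde{\pi}_i,\tilde{\pi}_i]_{\tilde{A}_{\bar{\phi}_0}}
\;=\;e^{-2t}\!\left(d_{A^{*}\!,X_0}\pi_i+\tfrac{1}{2}[\pi_i,\pi_i]_{A,\phi_0}\right),
\end{align*}
and analogously for $\tilde{\omega}_i$ with factor $e^{2t}$. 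These rely only on unpacking (\ref{hat kakko})--(\ref{bar anchor}) and extending the differentials $\bar{d}_A^{\phi_0},\hat{d}_A^{X_0}$ to multi-sections via the graded Leibniz rule; the $\partial/\partial t$ pieces generated by the exponential prefactor cancel against the $\phi_0$- and $X_0$-correction terms built into the bar/hat brackets. This already handles the ``each factor is a Dirac structure'' half of each claim.

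Second I would match the Nijenhuis torsions. Applying Lemma \ref{2-section Nijenhuis torsion} to the Lie algebroid $\tilde{A}_{\bar{\phi}_0}$ expresses $\mathcal{T}_{\mathrm{N}_{\tilde{L}_1,\tilde{L}_2}}$ in terms of $[\tilde{\pi}_i,\tilde{\pi}_j]_{\tilde{A}_{\bar{\phi}_0}}$, while Lemma \ref{pi and pi' Nijenhuis torsion} expresses $\mathcal{T}_{\mathrm{N}_{L_1,L_2}}$ in terms of $[\pi_i,\pi_j]_{A,\phi_0}$. Consequently (i) follows once one proves the clean bracket identity
\begin{align*}
[\tilde{\pi}_i,\tilde{\pi}_j]_{\tilde{A}_{\bar{\phi}_0}}\;=\;e^{-2t}[\pi_i,\pi_j]_{A,\phi_0},
\end{align*}
with the test-covectors on both sides matched by the bijection $\xi\leftrightarrow e^{t}\xi$ between $\Gamma(A^{*})$ and its embedding in $\Gamma(\tilde{A}^{*})$. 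Cases (ii) and (iii) require the analogous identities on the dual side and a mixed version for $\graph\omega^{\flat}$ against $\overline{\graph\pi^{\sharp}}$; the latter is a straightforward modification of Lemma \ref{pi and pi' Nijenhuis torsion} obtained by unwinding $\mathrm{N}_{L,L'}=\overline{L}\ast L'$ with $L=\overline{\graph\pi^{\sharp}}$ and $L'=\graph\omega^{\flat}$, and I would prove it in passing.

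The main obstacle is the bracket scaling formula itself. The bar-bracket (\ref{bar kakko}) is \emph{not} a conformal rescaling of $[\cdot,\cdot]_A$: it carries correction terms $\langle\phi_0,\tilde{X}\rangle\partial_t\tilde{Y}-\langle\phi_0,\tilde{Y}\rangle\partial_t\tilde{X}$. When the bracket is extended to bi-vectors and then applied to $\tilde{\pi}_i=e^{-t}\pi_i$, each $e^{-t}$ factor is differentiated by the derivation rule, producing extra contractions with $\phi_0$. The bookkeeping must show that precisely those extra contractions reconstitute the $\phi_0$-correction $(a_1-1)D_1\wedge\iota_{\phi_0}D_2-(-1)^{a_1+1}(a_2-1)\iota_{\phi_0}D_1\wedge D_2$ built into $[\cdot,\cdot]_{A,\phi_0}$, together with the overall $e^{-2t}$ prefactor. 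The analogous (and sign-reversed) calculation for $\tilde{\omega}_i=e^{t}\omega_i$ on $\tilde{A}^{*}_{\hat{X}_0}$ produces the factor $e^{2t}$; in case (ii) these two prefactors cancel in the mixed torsion, which is what makes (ii) self-consistent. Once this combinatorial identity is in hand, all three equivalences follow by direct substitution.
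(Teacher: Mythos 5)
Your reduction and your treatment of part (i) coincide with the paper's: the paper proves exactly the scaling identities $[\tilde{\pi}_{i},\tilde{\pi}_{j}\bar{]}_A^{\phi_0}=e^{-2t}[\pi_{i},\pi_{j}]_{A,\phi_0}$ and $\hat{d}_{A^{*}}^{X_{0}}\tilde{\pi}_{i}=e^{-2t}d_{A^{*}\!,X_{0}}\pi_{i}$ (and their duals) to handle both the Dirac-structure half and, via Lemmas \ref{2-section Nijenhuis torsion} and \ref{pi and pi' Nijenhuis torsion}, the torsion half of (i). However, for (ii) and (iii) your proposal has a genuine gap: the identities you invoke do not control the relevant Nijenhuis torsions. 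In both cases $\mathrm{N}_{L,L'}$ is a relation in $A\times A$ (resp.\ $\tilde{A}\times\tilde{A}$) and its torsion is computed with the bracket of $A$ (resp.\ the bar-bracket of $\tilde{A}_{\bar{\phi}_{0}}$), \emph{not} with the dual bracket; so a ``dual-side'' identity $[\tilde{\omega}_{i},\tilde{\omega}_{j}\hat{]}_{A^{*}}^{X_0}=e^{t}[\omega_{i},\omega_{j}]_{A^{*}\!,X_0}$ is relevant only to the Maurer--Cartan equations, and there is no mixed analogue of Lemma \ref{pi and pi' Nijenhuis torsion} expressing $\mathcal{T}_{\mathrm{N}_{\overline{\graph\pi_1^{\sharp}},\,\graph\omega_2^{\flat}}}$ through Schouten brackets of $\pi_1$ and $\omega_2$ alone --- that relation is the graph of the bundle map $N=\pi_1^{\sharp}\circ\omega_2^{\flat}$, and its torsion is the Nijenhuis tensor of $N$.

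Concretely, the difficulty you do not address is that the bar-bracket $[\tilde{X},\tilde{Y}\bar{]}_A^{\phi_0}=[\tilde{X},\tilde{Y}]_A+\langle\phi_0,\tilde{X}\rangle\partial_t\tilde{Y}-\langle\phi_0,\tilde{Y}\rangle\partial_t\tilde{X}$ is evaluated, in the torsion, on \emph{arbitrary} time-dependent sections of the relation, for which the $\partial_t$-corrections do not reduce to multiples of the sections themselves. The paper disposes of this in (ii) by noting that $\tilde{\pi}_1^{\sharp}\circ\tilde{\omega}_2^{\flat}=\pi_1^{\sharp}\circ\omega_2^{\flat}$ is independent of $t$ and proving a separate lemma (Lemma \ref{Nijenhuis structure property}) that for a $t$-independent $(1,1)$-tensor the four $\partial_t$-corrections cancel identically, so $\mathcal{T}_{N}^{\tilde{A}_{\bar{\phi}_{0}}}=\mathcal{T}_{N}^{A}$; and in (iii) by observing that differentiating the defining equation $\tilde{\omega}_1^{\flat}\tilde{X}=\tilde{\omega}_2^{\flat}\tilde{Y}$ in $t$ shows $(\partial_t\tilde{X},\partial_t\tilde{Y})$ again lies in the relation, whence the duality identities (\ref{hishigata no gen}) force the $\partial_t$-terms to cancel in the torsion. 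Neither of these two arguments (or a substitute for them) appears in your proposal, and without them the equivalences (ii) and (iii) are not established.
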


\noindent
In order to prove Theorem \ref{main 1}, we need the following lemmas.
 
\begin{lemma}\label{Jacobi and Poisson structures}
In the notation of Theorem \ref{main 1}, $\graph\tilde{\pi}_{1}^{\sharp}$ (resp. $\graph\tilde{\omega}_{1}^{\flat}$) is a Dirac structure on $\left(\tilde{A}_{\bar{\phi}_{0}},\tilde{A}^{*}_{\hat{X}_{0}}\right)$ if and only if $\graph\pi_{1}^{\sharp}$ (resp. $\graph\omega_{1}^{\flat}$) is a Dirac structure on $((A,\phi_{0}),(A^{*},X_{0}))$.
\end{lemma}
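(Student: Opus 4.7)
The plan is to reduce the lemma to an equivalence of the Maurer--Cartan type equations on the Jacobi bialgebroid $((A,\phi_{0}),(A^{*},X_{0}))$ and on the induced Lie bialgebroid $(\tilde{A}_{\bar{\phi}_{0}},\tilde{A}^{*}_{\hat{X}_{0}})$. By the theorem characterizing graphs as Dirac structures in the excerpt, $\graph\pi_{1}^{\sharp}$ (resp. $\graph\omega_{1}^{\flat}$) is Dirac on $((A,\phi_{0}),(A^{*},X_{0}))$ if and only if $\pi_{1}$ (resp. $\omega_{1}$) satisfies $d_{A^{*},X_{0}}\pi_{1}+\tfrac{1}{2}[\pi_{1},\pi_{1}]_{A,\phi_{0}}=0$ (resp. $d_{A,\phi_{0}}\omega_{1}+\tfrac{1}{2}[\omega_{1},\omega_{1}]_{A^{*},X_{0}}=0$). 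Applying the same theorem to the Lie bialgebroid $(\tilde{A}_{\bar{\phi}_{0}},\tilde{A}^{*}_{\hat{X}_{0}})$ (which is such by Proposition \ref{Jacobi and Lie bilagebroid}), $\graph\tilde{\pi}_{1}^{\sharp}$ (resp. $\graph\tilde{\omega}_{1}^{\flat}$) is Dirac if and only if $\hat{d}_{A^{*}}^{X_{0}}\tilde{\pi}_{1}+\tfrac{1}{2}[\tilde{\pi}_{1},\tilde{\pi}_{1}]_{\tilde{A}_{\bar{\phi}_{0}}}=0$ (resp. $\bar{d}_{A}^{\phi_{0}}\tilde{\omega}_{1}+\tfrac{1}{2}[\tilde{\omega}_{1},\tilde{\omega}_{1}]_{\tilde{A}^{*}_{\hat{X}_{0}}}=0$). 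It therefore suffices to prove, in each case, that these two equations are equivalent.

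For the differentials, I would first compute $\hat{d}_{A^{*}}^{X_{0}}(e^{-t})=-e^{-2t}X_{0}$ and $\bar{d}_{A}^{\phi_{0}}(e^{t})=e^{t}\phi_{0}$ directly from the formulas for $\hat{d}$ and $\bar{d}$ on functions given in the excerpt. Since $\hat{d}_{A^{*}}^{X_{0}}$ and $\bar{d}_{A}^{\phi_{0}}$ are Lie algebroid differentials, hence odd derivations of degree one, applying the graded Leibniz rule together with the given formulas on sections, and inducting on multi-section degree on $t$-independent elements, yield
\begin{align*}
\hat{d}_{A^{*}}^{X_{0}}\tilde{\pi}_{1}=e^{-2t}\,d_{A^{*},X_{0}}\pi_{1},\qquad \bar{d}_{A}^{\phi_{0}}\tilde{\omega}_{1}=e^{t}\,d_{A,\phi_{0}}\omega_{1},
\end{align*}
the extra $X_{0}\wedge$ (resp. $\phi_{0}\wedge$) contributions arising in the induction cancelling against those produced when differentiating the scalar $e^{-t}$ (resp. $e^{t}$), leaving exactly the twisted differential of the Jacobi algebroid $(A^{*},X_{0})$ (resp. $(A,\phi_{0})$).

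For the Schouten brackets, I would use that on $t$-independent sections the bar-bracket on $\tilde{A}$ reduces to $[\cdot,\cdot]_{A}$ (the $\partial/\partial t$ corrections vanish), while the hat-bracket on $\tilde{A}^{*}$ satisfies $[\alpha,\beta\hat{]}_{A^{*}}^{X_{0}}=e^{-t}([\alpha,\beta]_{A^{*}}-\iota_{X_{0}}\alpha\cdot\beta+\iota_{X_{0}}\beta\cdot\alpha)$ for $t$-independent $\alpha,\beta\in\Gamma(A^{*})$. Extending these by the graded Leibniz rule for the Schouten bracket and applying the identity $[fP,Q]=f[P,Q]-(-1)^{p(q-1)}(\iota_{d f}Q)\wedge P$, a direct but somewhat lengthy calculation for $\tilde{\pi}_{1}=e^{-t}\pi_{1}$ and $\tilde{\omega}_{1}=e^{t}\omega_{1}$ produces, besides an overall exponential factor, correction $\wedge$-terms proportional to $\iota_{\phi_{0}}\pi_{1}\wedge\pi_{1}$ and $\iota_{X_{0}}\omega_{1}\wedge\omega_{1}$. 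By the definition of $[\cdot,\cdot]_{A,\phi_{0}}$ and $[\cdot,\cdot]_{A^{*},X_{0}}$ in the excerpt, these are precisely the terms needed to pass from the ordinary Schouten bracket to the twisted one, giving
\begin{align*}
[\tilde{\pi}_{1},\tilde{\pi}_{1}]_{\tilde{A}_{\bar{\phi}_{0}}}=e^{-2t}\,[\pi_{1},\pi_{1}]_{A,\phi_{0}},\qquad [\tilde{\omega}_{1},\tilde{\omega}_{1}]_{\tilde{A}^{*}_{\hat{X}_{0}}}=e^{t}\,[\omega_{1},\omega_{1}]_{A^{*},X_{0}}.
\end{align*}

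Combining both displays, the Maurer--Cartan equation on $(\tilde{A}_{\bar{\phi}_{0}},\tilde{A}^{*}_{\hat{X}_{0}})$ equals $e^{-2t}$ (respectively $e^{t}$) times the corresponding equation on $((A,\phi_{0}),(A^{*},X_{0}))$; since these factors vanish nowhere, the two equations are equivalent, proving the lemma. The hard part is the bookkeeping in the Schouten bracket calculation, in particular verifying that the correction $\wedge$-terms produced by pulling the exponential factor through the bar- and hat-brackets combine with the intrinsic $\iota_{X_{0}}$-contributions of the hat-bracket to give exactly the $\iota_{\phi_{0}}\wedge$ and $\iota_{X_{0}}\wedge$ corrections distinguishing $[\cdot,\cdot]_{A,\phi_{0}}$ and $[\cdot,\cdot]_{A^{*},X_{0}}$ from the ordinary Schouten brackets, with no uncontrolled remainder.
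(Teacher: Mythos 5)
Your proposal is correct and follows essentially the same route as the paper: both reduce the lemma to the equivalence of the Maurer--Cartan type equations via the graph-characterization theorem, establish the identities $\hat{d}_{A^{*}}^{X_{0}}\tilde{\pi}_{1}=e^{-2t}d_{A^{*}\!,X_{0}}\pi_{1}$, $[\tilde{\pi}_{1},\tilde{\pi}_{1}\bar{]}_{A}^{\phi_{0}}=e^{-2t}[\pi_{1},\pi_{1}]_{A,\phi_{0}}$ (and their $e^{t}$ analogues for $\tilde{\omega}_{1}$), and conclude from the nonvanishing of the exponential factors. The paper simply asserts these identities ``by long calculations,'' whereas you sketch the bookkeeping (the degree-dependent $X_{0}\wedge$ and $\phi_{0}\wedge$ terms cancelling against those from differentiating $e^{\mp t}$), which is consistent with how the computation actually goes.
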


\begin{proof}
Let $[\cdot,\cdot\bar{]}_{\tilde{\pi}_{1}}^{\phi_{0}}$ $\left(\mbox{resp.}\ [\cdot,\cdot\hat{]}^{X_0}_{\tilde{\omega}_{1}}\right)$ be the bracket on $\Gamma (\tilde{A}^{*})$ (resp. $\Gamma (\tilde{A})$) induced by a 2-section $\tilde{\pi}_{1}$ on $\tilde{A}$ (resp. a 2-cosection $\tilde{\omega}_{1}$ on $\tilde{A}^{*}$).
 By long calculations,
we see that
\begin{align}\label{kakko tilde umu kankei}
[\tilde{\pi}_{1},\tilde{\pi}_{1}\bar{]}_A^{\phi_0}&=e^{-2t}[\pi_{1},\pi_{1}]_{A,\phi_0},\\
\hat{d}_{A^{*}}^{X_{0}}\tilde{\pi}_{1}&=e^{-2t}d_{A^{*}\!,X_{0}}\pi_{1}
\end{align}
hold on $\Gamma (\tilde{A}^{*})$. Therefore we obtain 
\begin{align}
\hat{d}_{A^{*}}^{X_{0}}\tilde{\pi}_{1}+\frac{1}{2}[\tilde{\pi}_{1},\tilde{\pi}_{1}\bar{]}_A^{\phi_0}&=e^{-2t}\left(d_{A^{*}\!,X_{0}}\pi_{1}+\frac{1}{2}[\pi_{1},\pi_{1}]_{A,\phi_0}\right)
\end{align}
on $\Gamma (\tilde{A}^{*})$. Since $\Gamma (\tilde{A}^{*})$ can be regarded as the set of curves in $\Gamma (A^{*})$, $\tilde{\pi}_{1}$ satisfies the Maurer-Cartan type equation for $\left(\tilde{A}_{\bar{\phi}_{0}},\tilde{A}^{*}_{\hat{X}_{0}}\right)$ if and only if $\pi_1$ satisfies the Maurer-Cartan type equation for $((A,\phi_{0}),(A^{*},X_{0}))$. 

Similarly, by long calculations, we see that 
\begin{align*}
[\tilde{\omega}_{1},\tilde{\omega}_{1}\hat{]}_{A^{*}}^{X_0}&=e^{t}[\omega_{1},\omega_{1}]_{A^{*}\!,X_0},\\
\bar{d}_{A}^{\phi_{0}}\tilde{\omega}_{1}&=e^{t}d_{A,\phi_{0}}\omega_{1}
\end{align*}
holds on $\Gamma (\tilde{A})$. Therefore we obtain 
\begin{align*}
\bar{d}_{A}^{\phi_{0}}\tilde{\omega}_{1}+\frac{1}{2}[\tilde{\omega}_{1},\tilde{\omega}_{1}\hat{]}_{A^{*}}^{X_0}=e^{t}\left(d_{A,\phi_{0}}\omega_{1}+\frac{1}{2}[\omega_{1},\omega_{1}]_{A^{*}\!,X_0}\right)
\end{align*}
on $\Gamma (\tilde{A})$. Since $\Gamma (\tilde{A})$ can be regarded as the set of curves in $\Gamma (A)$, $\tilde{\omega}_{1}$ satisfies the Maurer-Cartan type equation for $\left(\tilde{A}_{\bar{\phi}_{0}},\tilde{A}^{*}_{\hat{X}_{0}}\right)$ if and only if $\omega_1$ satisfies the Maurer-Cartan type equation for $((A,\phi_{0}),(A^{*},X_{0}))$.
\end{proof}
 
\begin{lemma}\label{Nijenhuis structure property}
In the notation of Theorem \ref{main 1}, any $(1,1)$-tensor field $N$ on $\tilde{A}_{\bar{\phi}_{0}}$ independent of $t$ can be regarded as a $(1,1)$-tensor field on $A$ in general. Then $N$ is Nijenhuis on $\tilde{A}_{\bar{\phi}_{0}}$ if and only if $N$ is Nijenhuis on $A$.
\end{lemma}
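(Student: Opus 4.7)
The plan is to reduce the Nijenhuis condition on $\tilde{A}_{\bar\phi_0}$ to a check on $t$-independent sections, where the two brackets agree, and then invoke tensoriality of the Nijenhuis torsion to conclude the equivalence on all sections.

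First, I would observe that a $t$-independent $(1,1)$-tensor $N$ on $\tilde{A}$ preserves $t$-independent sections: if $X\in \Gamma(\tilde A)$ satisfies $\partial X/\partial t=0$, then so does $NX$, since the fiberwise action of $N$ does not depend on $t$. Moreover, $t$-independent sections of $\tilde A$ are canonically identified with sections of $A$.

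Next, for any $t$-independent $X, Y\in \Gamma(\tilde A)$, formula (\ref{bar kakko}) yields
\[
[X, Y\bar{]}_A^{\phi_0} = [X, Y]_A + \langle \phi_0, X\rangle \tfrac{\partial Y}{\partial t} - \langle \phi_0, Y\rangle \tfrac{\partial X}{\partial t} = [X, Y]_A,
\]
and this $[X,Y]_A$ is itself $t$-independent. Applying the same identity to the pairs $(NX,NY)$, $(NX,Y)$, $(X,NY)$ (all consisting of $t$-independent sections by the previous step), one obtains, directly from the defining formula of the Nijenhuis torsion,
\[
\mathcal{T}_N^{\tilde{A}_{\bar\phi_0}}(X, Y) = [NX, NY]_A - N[NX, Y]_A - N[X, NY]_A + N^2[X, Y]_A = \mathcal{T}_N^A(X, Y)
\]
for every pair of $t$-independent sections $X,Y$.

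Finally, $\mathcal{T}_N$ is $C^\infty$-bilinear, so its vanishing is a pointwise condition, and it suffices to check it on a generating family. Any local frame $\{e_i\}$ of $A$ gives, upon pullback, a $t$-independent local frame of $\tilde A$, hence $t$-independent sections span $\Gamma(\tilde A)$ locally over $C^\infty(M\times \mathbb R)$. Combined with the identity displayed above, this gives $\mathcal{T}_N^{\tilde{A}_{\bar\phi_0}} \equiv 0$ if and only if $\mathcal{T}_N^A \equiv 0$, as claimed. There is no real obstacle; the argument is purely a matter of noticing that the $\partial/\partial t$-terms in (\ref{bar kakko}) drop out precisely in the class of sections that both generates $\Gamma(\tilde A)$ tensorially and suffices to test the torsion.
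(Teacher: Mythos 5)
Your proof is correct, and it reaches the same key identity as the paper by a slightly different (and somewhat cleaner) route. The paper's proof asserts, "by a straightforward calculation," that $\mathcal{T}_{N}^{\tilde{A}_{\bar{\phi}_{0}}}(\tilde{X},\tilde{Y})=\mathcal{T}_{N}^{A}(\tilde{X},\tilde{Y})$ for \emph{arbitrary} $\tilde{X},\tilde{Y}\in\Gamma(\tilde{A})$; unpacking that calculation, one expands each of the four brackets in $\mathcal{T}_{N}^{\tilde{A}_{\bar{\phi}_{0}}}$ via (\ref{bar kakko}), uses $\partial(N\tilde{X})/\partial t=N(\partial\tilde{X}/\partial t)$, and observes that all eight $\partial/\partial t$-terms cancel in pairs. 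You avoid this cancellation argument entirely by evaluating only on $t$-independent sections, where the $\partial/\partial t$-terms are zero from the outset, and then you recover the general statement from the $C^\infty$-bilinearity (and antisymmetry) of the Nijenhuis torsion together with the fact that a local frame of $A$ pulls back to a $t$-independent local frame of $\tilde{A}$. Both arguments are valid: the paper's yields the pointwise identity of the two torsions on all sections in one stroke, while yours isolates the reason no computation is really needed — the torsion is a tensor, so it suffices to test it where the two brackets visibly coincide. One small point worth making explicit in your write-up is the verification that the Nijenhuis torsion of a $(1,1)$-tensor on a Lie algebroid is indeed $C^\infty$-bilinear (the anchor terms cancel exactly as in the tangent-bundle case); you assert this, and it is true, but since the whole weight of your final step rests on it, a one-line check would make the argument self-contained.
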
 

\begin{proof}
We set the Nijenhuis torsion of $N$ on $\tilde{A}_{\bar{\phi}_{0}}$ and $A$ by $\mathcal{T}_{N}^{\tilde{A}_{\bar{\phi}_{0}}}$ and $\mathcal{T}_{N}^{A}$ respectively. By a straightforward calculation, we have for any $\tilde{X}$ and $\tilde{Y}$ in $\Gamma (\tilde{A})$, 
\begin{align*}
\mathcal{T}_{N}^{\tilde{A}_{\bar{\phi}_{0}}}(\tilde{X},\tilde{Y})=\mathcal{T}_{N}^{A}(\tilde{X},\tilde{Y}).
\end{align*}
Since $\tilde{X}$ and $\tilde{Y}$ in $\Gamma (\tilde{A})$ can be regarded as curves in $\Gamma (A)$, $\mathcal{T}_{N}^{\tilde{A}_{\bar{\phi}_{0}}}=0$ is equivalent with $\mathcal{T}_{N}^{A}=0$.
\end{proof}

\begin{proof}[Proof of Theorem \ref{main 1}]
We prove (i). We set
\begin{align*}
L_{i}:=\overline{\graph \pi_{i}^{\sharp}},\quad \tilde{L}_{i}:=\overline{\graph \tilde{\pi}_{i}^{\sharp}}\quad (i=1,2).
\end{align*} 
By Lemma \ref{2-section Nijenhuis torsion}, Lemma \ref{pi and pi' Nijenhuis torsion} and the equation (\ref{kakko tilde umu kankei}), we compute
\begin{align*}
\mathcal{T}_{\mathrm{N}_{\tilde{L}_{1},\tilde{L}_{2}}}&((\tilde{\pi}_{2}^{\sharp}\tilde{\xi}_1,\tilde{\pi}_{1}^{\sharp}\tilde{\xi}_{1}),(\tilde{\pi}_{2}^{\sharp}\tilde{\xi}_2,\tilde{\pi}_{1}^{\sharp}\tilde{\xi}_2),(\tilde{\xi},\tilde{\xi}',\tilde{\xi}''))\\
                                      &=[\tilde{\pi}_{1},\tilde{\pi}_{1}\bar{]}_A^{\phi_0}(\tilde{\xi}_1,\tilde{\xi}_2,\tilde{\xi})+[\tilde{\pi}_{2},\tilde{\pi}_{2}\bar{]}_A^{\phi_0}(\tilde{\xi}_1,\tilde{\xi}_2,\tilde{\xi}'')-2[\tilde{\pi}_{1},\tilde{\pi}_{2}\bar{]}_A^{\phi_0}(\tilde{\xi}_1,\tilde{\xi}_2,\tilde{\xi}')\\
                                      &=e^{-2t}[\pi_{1},\pi_{1}]_{A,\phi_0}(\tilde{\xi}_1,\tilde{\xi}_2,\tilde{\xi})+e^{-2t}[\pi_{2},\pi_{2}]_{A,\phi_0}(\tilde{\xi}_1,\tilde{\xi}_2,\tilde{\xi}'')\\
                                      &\qquad \qquad \qquad \qquad \qquad \qquad \qquad \qquad -2e^{-2t}[\pi_{1},\pi_{2}]_{A,\phi_0}(\tilde{\xi}_1,\tilde{\xi}_2,\tilde{\xi}')\\
                                      &=e^{-2t}\mathcal{T}_{\mathrm{N}_{L_{1},L_{2}}}((\pi_{2}^{\sharp}\tilde{\xi}_1,\pi_{1}^{\sharp}\tilde{\xi}_{1}),(\pi_{2}^{\sharp}\tilde{\xi}_2,\pi_{1}^{\sharp}\tilde{\xi}_2),(\tilde{\xi},\tilde{\xi}',\tilde{\xi}'')).
\end{align*}
Since any element in $\tilde{L}_{i}$ can be regarded as a curve in $L_{i}$, it is clear that any element in $\mathrm{N}_{\tilde{L}_{1},\tilde{L}_{2}}$ or $\mathrm{N}_{\tilde{L}_{1},\tilde{L}_{2}}^{*}\Diamond \mathrm{N}_{\tilde{L}_{1},\tilde{L}_{2}}^{*}$ can also be regarded as a curve in $\mathrm{N}_{L_{1},L_{2}}$ or $\mathrm{N}_{L_{1},L_{2}}^{*}\Diamond \mathrm{N}_{L_{1},L_{2}}^{*}$, respectively. Therefore the condition $\mathcal{T}_{\mathrm{N}_{\tilde{L}_{1},\tilde{L}_{2}}}=0$ is equivalent with the condition $\mathcal{T}_{\mathrm{N}_{L_{1},L_{2}}}=0$. This means (i).

For any $\tilde{X}$ in $\Gamma (\tilde{A})$, we have
\begin{align*}
(\tilde{\pi}_{1}^{\sharp}\circ \tilde{\omega}_{2}^{\flat})(\tilde{X})&=\tilde{\pi}_{1}^{\sharp}(\tilde{\omega}_{2}^{\flat}\tilde{X})=e^{-t}\pi_{1}^{\sharp}(e^{t}\omega_{2}^{\flat}\tilde{X})={\pi}_{1}^{\sharp}(\omega_{2}^{\flat}\tilde{X})=(\pi_{1}^{\sharp}\circ \omega_{2}^{\flat})(\tilde{X}).
\end{align*}
Hence the $(1,1)$-tensor field $\tilde{\pi}_{1}^{\sharp}\circ \tilde{\omega}_{2}^{\flat}=\pi_{1}^{\sharp}\circ \omega_{2}^{\flat}$ is independent of $t$. By the definition, $(\overline{\graph\tilde{\pi}_{1}^{\sharp}},\graph\tilde{\omega}_{2}^{\flat})$ is a Dirac pair on $\left(\tilde{A}_{\bar{\phi}_{0}},\tilde{A}^{*}_{\hat{X}_{0}}\right)$ if and only if 
\begin{align*}
\mathrm{N}_{\tilde{L}_{1},\tilde{L}_{2}'}=\graph\tilde{\pi}_{1}^{\sharp}*\graph\tilde{\omega}_{2}^{\flat}=\graph(\tilde{\pi}_{1}^{\sharp}\circ\tilde{\omega}_{2}^{\flat})=\graph(\pi_{1}^{\sharp}\circ\omega_{2}^{\flat})
\end{align*}
is a Nijenhuis relation on $\tilde{A}_{\bar{\phi}_{0}}$, where we set $\tilde{L}_{1}:=\overline{\graph\tilde{\pi}_{1}^{\sharp}}$ and $\tilde{L}_{2}':=\graph\tilde{\omega}_{2}^{\flat}$. This condition is equivalent with the condition $\pi_{1}^{\sharp}\circ\omega_{2}^{\flat}$ is a Nijenhuis structure on $\tilde{A}_{\bar{\phi}_{0}}$. Then by Lemma \ref{Nijenhuis structure property}, this is equivalent with that $\pi_{1}^{\sharp}\circ\omega_{2}^{\flat}$ is a Nijenhuis structure on $A$. Similarly, $(\overline{\graph\pi_{1}^{\sharp}},\graph\omega_{2}^{\flat})$ is a Dirac pair on $((A,\phi_{0}),(A^{*},X_{0}))$ if and only if $\pi_{1}^{\sharp}\circ\omega_{2}^{\flat}$ is a Nijenhuis structure on $A$. Therefore we obtain (ii).

Finally we prove (iii). We set $L_{i}':=\graph\omega_{i}^{\flat}$ and $\tilde{L}_{i}':=\graph\tilde{\omega}_{i}^{\flat}$. A pair $(\tilde{X},\tilde{Y})$ belongs to $\mathrm{N}_{\tilde{L}_{1}',\tilde{L}_{2}'}$ if and only if $\tilde{\omega}_{1}^{\flat}\tilde{X}=\tilde{\omega}_{2}^{\flat}\tilde{Y}$ holds by the definition. By differentiating both sides of $\tilde{\omega}_{1}^{\flat}\tilde{X}=\tilde{\omega}_{2}^{\flat}\tilde{Y}$ with respect to $t$, we obtain
\begin{align*}
\tilde{\omega}_{1}^{\flat}\tilde{X}+\tilde{\omega}_{1}^{\flat}\frac{\partial \tilde{X}}{\partial t}=\tilde{\omega}_{2}^{\flat}\tilde{Y}+\tilde{\omega}_{2}^{\flat}\frac{\partial \tilde{Y}}{\partial t}.
\end{align*}
Therefore $\tilde{\omega}_{1}^{\flat}\displaystyle\frac{\partial \tilde{X}}{\partial t}=\tilde{\omega}_{2}^{\flat}\displaystyle\frac{\partial \tilde{Y}}{\partial t}$ holds since $\tilde{\omega}_{i}=e^t\omega_i$ and $\tilde{\omega}_{1}^{\flat}\tilde{X}=\tilde{\omega}_{2}^{\flat}\tilde{Y}$. This means that a pair $\left(\displaystyle\frac{\partial \tilde{X}}{\partial t},\displaystyle\frac{\partial \tilde{Y}}{\partial t}\right)$ belongs to $\mathrm{N}_{\tilde{L}_{1}',\tilde{L}_{2}'}$. Therefore we obtain
\begin{align}\label{hishigata no gen}
\left\langle \tilde{\xi}_{1},\frac{\partial \tilde{Y}}{\partial t}\right\rangle=\left\langle \tilde{\xi}_{2},\frac{\partial \tilde{X}}{\partial t}\right\rangle, \quad \left\langle \tilde{\xi}_{2},\frac{\partial \tilde{Y}}{\partial t}\right\rangle=\left\langle \tilde{\xi}_{3},\frac{\partial \tilde{X}}{\partial t}\right\rangle
\end{align}
for any $(\tilde{X},\tilde{Y})$ in $\mathrm{N}_{\tilde{L}_{1}',\tilde{L}_{2}'}$ and $(\tilde{\xi}_{1},\tilde{\xi}_{2},\tilde{\xi}_{3})$ in $\mathrm{N}_{\tilde{L}_{1}',\tilde{L}_{2}'}^{*}\Diamond$ $\mathrm{N}_{\tilde{L}_{1}',\tilde{L}_{2}'}^{*}$. By using equations (\ref{hishigata no gen}), it follows that $\mathcal{T}_{\mathrm{N}_{\tilde{L}_{1}',\tilde{L}_{2}'}}=\mathcal{T}_{\mathrm{N}_{L_{1}',L_{2}'}}$ on $\mathrm{N}_{\tilde{L}_{1}',\tilde{L}_{2}'}\times\mathrm{N}_{\tilde{L}_{1}',\tilde{L}_{2}'}\times\mathrm{N}_{\tilde{L}_{1}',\tilde{L}_{2}'}^{*}\Diamond$ $\mathrm{N}_{\tilde{L}_{1}',\tilde{L}_{2}'}^{*}$.

Since $\mathrm{N}_{\tilde{L}_{1}',\tilde{L}_{2}'}$ and $\mathrm{N}_{\tilde{L}_{1}',\tilde{L}_{2}'}^{*}\Diamond$ $\mathrm{N}_{\tilde{L}_{1}',\tilde{L}_{2}'}^{*}$ are the sets of all curves in $\mathrm{N}_{L_{1}',L_{2}'}$ and $\mathrm{N}_{L_{1}',L_{2}'}^{*}\Diamond$ $\mathrm{N}_{L_{1}',L_{2}'}^{*}$ respectively, we see that $\mathcal{T}_{\mathrm{N}_{\tilde{L}_{1}',\tilde{L}_{2}'}}=0$ and $\mathcal{T}_{\mathrm{N}_{L_{1}',L_{2}'}}=0$ are equivalent.
\end{proof}

\begin{rem}
It follows immediately that $(\graph\tilde{\omega}_{1}^{\flat},\overline{\graph\tilde{\pi}_{2}^{\sharp}})$ is a Dirac pair on $\left(\tilde{A}_{\bar{\phi}_{0}},\tilde{A}^{*}_{\hat{X}_{0}}\right)$ if and only if $(\graph\omega_{1}^{\flat}$, $\overline{\graph\pi_{2}^{\sharp}})$ is a Dirac pair on $((A,\phi_{0}),(A^{*},X_{0}))$ by (iii) in Theorem \ref{main 1} and the fact that if a pair $(L,L')$ is a Dirac pair on $\left(\tilde{A}_{\bar{\phi}_{0}},\tilde{A}^{*}_{\hat{X}_{0}}\right)$ (resp. $((A,\phi_{0}),(A^{*},X_{0}))$), so is $(L',L)$.
\end{rem}
 
 \section{Dirac pairs on Jacobi algebroids}\label{Dirac pairs on Jacobi algebroids}

In this section, we consider a Dirac pair $(L,L')$ on a Jacobi algebroid $(A,\phi_{0})$, i.e., a pair consisting of two Dirac structures $L$ and $L'$ on $(A,\phi_{0})$ such that $\mathrm{N}_{L,L'}$ is a Nijenhuis relation on $(A,\phi_{0})$. 

 \subsection{Jacobi and $\phi_0$-presymplectic pairs on Jacobi algebroids}\label{Jacobi and phi_0-presymplectic pairs on Jacobi algebroids}
  
In this subsection, we investigate Jacobi and $\phi_{0}$-presymplectic pairs, which are defined by using Dirac pairs on $(A,\phi_{0})$. We show that these pairs have properties similar to Poisson and presymplectic pairs on a Lie algebroid, respectively. In addition, we show the relationship between Jacobi and Poisson pairs.

By Example \ref{standard case M-C}, for any Jacobi algebroid $(A,\phi_{0})$, a $2$-section $\pi $ in $\Gamma (\Lambda^{2}A)$ is a Jacobi structure on $(A,\phi_{0})$ if and only if $\graph \pi^{\sharp}$ is a Dirac structure on $(A,\phi_{0})$. Similarly, a $2$-cosection $\omega $ in $\Gamma (\Lambda^{2}A^{*})$ is a $\phi_{0}$-presymplectic structure on $(A,\phi_{0})$ if and only if $\graph \omega^{\flat}$ is a Dirac structure on $(A,\phi_{0})$. We define Jacobi and $\phi_{0}$-presymplectic pairs as analogy of Poisson and presymplectic pairs on a Lie algebroid.

\begin{defn}\label{Jacobi and phi0-presymp pairs}
Let $(A,\phi_{0})$ be a Jacobi algebroid, $\pi_{i}$ a Jacobi structure on $(A,\phi_{0})$ and $\omega_{i}$ a $\phi_{0}$-presymplectic structure on $(A,\phi_{0})$ $(i=1,2)$.
\begin{enumerate}
 \item[(i)] A pair $(\pi_{1},\pi_{2})$ is a {\it Jacobi pair} if a pair $(\overline{\graph\pi_{1}^{\sharp}},\overline{\graph\pi_{2}^{\sharp}})$ is a Dirac pair on $(A,\phi_{0})$. A Jacobi pair $(\pi_{1},\pi_{2})$ is {\it non-degenerate}  if both $\pi_{1} $ and $\pi_{2}$ are non-degenerate;
 \item[(ii)] A pair $(\omega_{1},\omega_{2})$ is a {\it $\phi_{0}$-presymplectic pair} if a pair $(\graph \omega_1^{\flat},\graph \omega_2^{\flat})$ is a Dirac pair on $(A,\phi_{0})$. A $\phi_{0}$-presymplectic pair consitinig of two $\phi_{0}$-symplectic pair is called a {\it $\phi_{0}$-symplectic pair}.
  \end{enumerate}
\end{defn}

It follows immediately from Lemma \ref{pi and pi' Nijenhuis torsion} and Definition \ref{Jacobi and phi0-presymp pairs} that a pair $(\pi_{1},\pi_{2})$ consisting of compatible Jacobi structures is a Jacobi pair. Conversely, if a Jacobi pair $(\pi_{1},\pi_{2})$ satisfies
\begin{align}\label{compatible Jacobi condition}
A^{*}=(\pi_{1}^{\sharp})^{-1}(\mathrm{Im}\,\pi_{2}^{\sharp})\cap(\pi_{2}^{\sharp})^{-1}(\mathrm{Im}\,\pi_{1}^{\sharp}),
 \end{align}
then $\pi_{1}$ and $\pi_{2}$ are compatible. In particular, since a non-degenerate Jacobi pair $(\pi_{1},\pi_{2})$ satisfies (\ref{compatible Jacobi condition}), two Jacobi structures $\pi_{1}$ and $\pi_{2}$ are compatible. It is well known that compatible two Jacobi structures themselves are induced by Jacobi-Nijenhuis structures \cite{MMP}, \cite{CNN} and so on.
 
 The following proposition is a relationship between $\phi_{0}$-symplectic pairs and non-degenerate Jacobi pairs. The proof is similar to Proposition \ref{presymplectic pair property} (\cite{K}).
  
 \begin{prop}
There exists a one-to-one correspondence between $\phi_{0}$-symplectic pairs and non-degenerate Jacobi pairs.
 \end{prop}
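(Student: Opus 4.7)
The plan is to leverage the recalled one-to-one correspondence between $\phi_0$-symplectic structures $\omega$ and non-degenerate Jacobi structures $\pi$ on $(A,\phi_0)$, namely $\omega^\flat=-(\pi^\sharp)^{-1}$, and to check that it extends componentwise to pairs. Given a non-degenerate Jacobi pair $(\pi_1,\pi_2)$, I would set $\omega_i^\flat:=-(\pi_i^\sharp)^{-1}$; each $\omega_i$ is then automatically $\phi_0$-symplectic. Conversely, from a $\phi_0$-symplectic pair $(\omega_1,\omega_2)$ I would recover non-degenerate Jacobi structures via $\pi_i^\sharp:=-(\omega_i^\flat)^{-1}$. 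The substantive point is then to show that $(\overline{\graph \pi_1^\sharp},\overline{\graph \pi_2^\sharp})$ is a Dirac pair on $(A,\phi_0)$ if and only if the corresponding pair $(\graph \omega_1^\flat,\graph \omega_2^\flat)$ is.

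To do this I would first unpack the two Nijenhuis relations directly from the definitions of $\ast$ and $\overline{(\cdot)}$ in Subsection~\ref{Relations}. One obtains
\[
\mathrm{N}_{\overline{\graph \pi_1^\sharp},\overline{\graph \pi_2^\sharp}}=\{(\pi_2^\sharp\xi,\pi_1^\sharp\xi)\,|\,\xi\in A^*\}
\]
(already used in the setup of Lemma~\ref{pi and pi' Nijenhuis torsion}) and
\[
\mathrm{N}_{\graph \omega_1^\flat,\graph \omega_2^\flat}=\{(X,X')\in A\times A\,|\,\omega_1^\flat X'=\omega_2^\flat X\}.
\]
Under $\omega_i^\flat=-(\pi_i^\sharp)^{-1}$, the equation $\omega_1^\flat X'=\omega_2^\flat X$ reduces to $(\pi_1^\sharp)^{-1}X'=(\pi_2^\sharp)^{-1}X$; letting $\xi:=(\pi_1^\sharp)^{-1}X'$ one gets $X=\pi_2^\sharp\xi$ and $X'=\pi_1^\sharp\xi$. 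Hence the two relations literally coincide as subbundles of $A\times A$.

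Since the Nijenhuis torsion $\mathcal{T}_R$ of a relation $R$ depends only on $R$ itself (through $\underline{R}$ and $\underline{R}^*$), identical relations have identical torsions. Therefore one of them is a Nijenhuis relation if and only if the other is, which yields the required equivalence of Dirac pair conditions, and thus a bijection between non-degenerate Jacobi pairs and $\phi_0$-symplectic pairs.

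I expect the only real obstacle to be careful bookkeeping with signs and with the two conventions for $\graph \pi^\sharp$ and $\graph \omega^\flat$ as subsets of $A\oplus A^*$ versus as relations in $A\times A^*$; the underlying argument is the Jacobi-algebroid analogue of the Lie-algebroid case (Proposition~\ref{presymplectic pair property} from~\cite{K}), and the $\phi_0$-twist does not alter the algebraic shape of the relations involved, so no essentially new ingredient is needed.
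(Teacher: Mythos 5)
Your proposal is correct and is exactly the argument the paper intends when it defers to the proof of Proposition \ref{presymplectic pair property} in \cite{K}: the bijection $\omega_i^{\flat}=-(\pi_i^{\sharp})^{-1}$ exchanges non-degenerate Jacobi structures with $\phi_0$-symplectic structures, and under it the relations $\mathrm{N}_{\overline{\graph\pi_1^{\sharp}},\overline{\graph\pi_2^{\sharp}}}$ and $\mathrm{N}_{\graph\omega_1^{\flat},\graph\omega_2^{\flat}}$ coincide as subsets of $A\times A$, so their Nijenhuis torsions (which are computed with the same bracket $[\cdot,\cdot]_A$ in both cases) agree. Your identification of the two relations is consistent with Lemma \ref{pi and pi' Nijenhuis torsion} and Example \ref{cotan r2 times r}, and the sign in the correspondence cancels as you note, so no gap remains.
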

 
The following example is analogy of Example 3.5 in \cite{K}.

 \begin{example}\label{cotan r2 times r}
Let $M:=T^{*}\mathbb{R}^{2}\times \mathbb{R}$ and $\beta$ be the canonical contact form on $M$. In canonical coordinates $(x_{1},x_{2},y_{1},y_{2},z)$ on $M$, $\beta=-\sum_{i}^{}y_{i}dx_{i}+dz$. Explicit examples $(\Omega,\omega_{H}), (\Omega,\omega_{E})$ and $(\Omega,\omega_{P})$ of $(0,1)$-presymplectic pairs on $(TM\oplus \mathbb{R},(0,1))$ are defined by
\begin{align*}
\Omega&:=(d\beta,\beta)\\
\omega_{\mathrm{H}}&:=(d\beta_{\mathrm{H}},\beta_{\mathrm{H}}),\ \beta_{\mathrm{H}}:=-y_{1}dx_{1}+y_{2}dx_{2}+dz\\
\omega_{\mathrm{E}}&:=(d\beta_{\mathrm{E}},\beta_{\mathrm{E}}),\ \beta_{\mathrm{E}}:=-y_{2}dx_{1}+y_{1}dx_{2}+dz\\
\omega_{\mathrm{P}}&:=(d\beta_{\mathrm{P}},\beta_{\mathrm{P}}),\ \beta_{\mathrm{P}}:=-y_{2}dx_{1}+dz,
 \end{align*}
where the indices $\mathrm{H},\mathrm{E}$ and $\mathrm{P}$ mean ``hyperbolic'', ``elliptic'' and ``parabolic'', respectively. These mean that the restriction of a $2$-form $d\beta_{\mathrm{H}}$ (resp. $d\beta_{\mathrm{E}}$, $d\beta_{\mathrm{P}}$) to $T^{*}\mathbb{R}^{2}$ corresponds to a hyperbolic (resp. elliptic, parabolic) Monge-Amp$\grave{\mathrm{e}}$re equation with constant coefficients \cite{K} (see also \cite{LRC}, \cite{KLR}). By Example \ref{contact example}, $\Omega $ is a $(0,1)$-symplectic structure on $(TM\oplus\mathbb{R},(0,1))$ and satisfies $\Omega ^{3}=(0,(d\beta)^2\wedge \beta)\neq0$. Since $\omega_{\mathrm{H}}$ (resp. $\omega_{\mathrm{E}}$) satisfies $d_{TM\oplus\mathbb{R},(0,1)}\omega_{\mathrm{H}}=0$ (resp. $d_{TM\oplus\mathbb{R},(0,1)}\omega_{\mathrm{E}}=0$) and $\omega_{\mathrm{H}}^{3}=-\Omega^{3}$ (resp. $\omega_{\mathrm{E}}^{3}=\Omega^{3}$), $\omega_{\mathrm{H}}$ (resp. $\omega_{\mathrm{E}}$) is a $(0,1)$-symplectic structure on $(TM\oplus\mathbb{R},(0,1))$. On the other hand, since $\omega_{\mathrm{P}}$ satisfies $d_{TM\oplus\mathbb{R},(0,1)}\omega_{\mathrm{P}}=0$ and $\omega_{\mathrm{P}}^{3}=0$, $\omega_{\mathrm{P}}$ is not a $(0,1)$-symplectic structure but a $(0,1)$-presymplectic structure on $(TM\oplus\mathbb{R},(0,1))$. In order to show that pairs $(\Omega,\omega_{H}),(\Omega,\omega_{E})$ and $(\Omega,\omega_{P})$ are $(0,1)$-presymplectic, since
\begin{align*}
\mathrm{N}_{L,L_{\mathrm{H}}}&=\overline{\graph\Omega^{\flat}}*\graph\omega_{\mathrm{H}}^{\flat}=\graph(\Omega^{\flat})^{-1}*\graph\omega_{\mathrm{H}}^{\flat}\\
                             &=\graph((\Omega^{\flat})^{-1}\circ\omega_{\mathrm{H}}^{\flat}),\\
\mathrm{N}_{L,L_{\mathrm{E}}}&=\graph((\Omega^{\flat})^{-1}\circ\omega_{\mathrm{E}}^{\flat}),\\
\mathrm{N}_{L,L_{\mathrm{P}}}&=\graph((\Omega^{\flat})^{-1}\circ\omega_{\mathrm{P}}^{\flat}),
\end{align*}
where $L:=\graph\Omega^{\flat}, L_{\mathrm{H}}:=\graph\omega_{\mathrm{H}}^{\flat},L_{\mathrm{E}}:=\graph\omega_{\mathrm{E}}^{\flat}$ and $L_{\mathrm{P}}:=\graph\omega_{\mathrm{P}}^{\flat}$, it is sufficient to prove that $N_{\mathrm{H}}:=(\Omega^{\flat})^{-1}\circ\omega_{\mathrm{H}}^{\flat}, N_{\mathrm{E}}:=(\Omega^{\flat})^{-1}\circ\omega_{\mathrm{E}}^{\flat}$ and $N_{\mathrm{P}}:=(\Omega^{\flat})^{-1}\circ\omega_{\mathrm{P}}^{\flat}$ are Nijenhuis. By straightforward calculations, it follows that $\mathcal{T}_{N_{\mathrm{H}}}, \mathcal{T}_{N_{\mathrm{E}}}$ and $\mathcal{T}_{N_{\mathrm{P}}}$ vanish. Therefore $N_{\mathrm{H}},N_{\mathrm{E}}$ and $N_{\mathrm{P}}$ are Nijenhuis. Hence $(\Omega,\omega_{H})$ and $(\Omega,\omega_{E})$ are a $(0,1)$-symplectic pairs on $(TM\oplus \mathbb{R},(0,1))$ and $(\Omega,\omega_{P})$ is a $(0,1)$-presymplectic pair on $(TM\oplus \mathbb{R},(0,1))$.
 \end{example}
 
 Now, we show two relationships between Jacobi and Poisson pairs. 
 
 By Example \ref{Jacobi manifold}, if $\pi$ is Poisson on a Lie algebroid $A$ over $M$, then $(\pi,0)$ is Jacobi on a Jacobi algebroid $(A\oplus \mathbb{R},(0,1))$ over $M$. It is well-known that compatible Poisson structures $\pi_{1}$ and $\pi_{2}$ on a Lie algebroid $A$ induce compatible Jacobi structures $(\pi_{1},0)$ and $(\pi_{2},0)$ on a Jacobi algebroid $(A\oplus \mathbb{R},(0,1))$. The following theorem is a generalization of this relation.

\begin{theorem}
Let $(\pi_{1},\pi_{2})$ be a pair of $2$-sections on a Lie algebroid $A$ over $M$. Then $(\pi_{1},\pi_{2})$ is a Poisson pair on $A$ if and only if $((\pi_{1},0),(\pi_{2},0))$ is a Jacobi pair on a Jacobi algebroid $(A\oplus \mathbb{R},(0,1))$ over $M$.
\end{theorem}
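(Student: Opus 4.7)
The plan is to reduce the claimed equivalence to a direct comparison of Nijenhuis torsions via Lemma \ref{2-section Nijenhuis torsion} and Lemma \ref{pi and pi' Nijenhuis torsion}, after establishing that the $(0,1)$-Schouten bracket of $(\pi_i,0)$ and $(\pi_j,0)$ coincides (up to the identifications of Example \ref{A oplus R}) with the ordinary Schouten bracket of $\pi_i$ and $\pi_j$ on $A$. First I would note that, by Example \ref{Jacobi manifold}, $(\pi_i,0)$ is a Jacobi structure on $(A\oplus\mathbb{R},(0,1))$ if and only if $\pi_i$ is a Poisson structure on $A$. Hence both sides of the equivalence impose the same condition on each individual $\pi_i$, and only the Nijenhuis relation condition needs to be compared, namely vanishing of $\mathcal{T}_{\mathrm{N}_{L_1,L_2}}$ versus $\mathcal{T}_{\mathrm{N}_{\tilde{L}_1,\tilde{L}_2}}$, with $L_i := \overline{\graph\pi_i^\sharp}$ and $\tilde{L}_i := \overline{\graph(\pi_i,0)^\sharp}$.

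Next I would compute $[(\pi_i,0),(\pi_j,0)]_{A\oplus\mathbb{R},(0,1)}$. By the Schouten-bracket formula in Example \ref{A oplus R}, the $(A\oplus\mathbb{R})$-Schouten bracket gives $[(\pi_i,0),(\pi_j,0)]_{A\oplus\mathbb{R}}=([\pi_i,\pi_j]_A,0)$, because both second components are zero. A short calculation using (\ref{multi-vector field formula}) shows $\iota_{(0,1)}(\pi_i,0)=0$, so the $(0,1)$-correction terms in the $\phi_0$-Schouten bracket vanish and we obtain
\begin{align*}
[(\pi_i,0),(\pi_j,0)]_{A\oplus\mathbb{R},(0,1)}=([\pi_i,\pi_j]_A,0).
\end{align*}
Moreover, evaluating this $3$-vector on cosections $(\beta_k,g_k)\in(A\oplus\mathbb{R})^*$ via (\ref{multi-vector field formula}) annihilates all $g_k$-contributions (since the $\Gamma(\Lambda^2A)$-part is zero), leaving simply $[\pi_i,\pi_j]_A(\beta_1,\beta_2,\beta_3)$.

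Since $(\pi_i,0)^\sharp(\alpha,f)=(\pi_i^\sharp\alpha,0)$, elements of $\mathrm{N}_{\tilde{L}_1,\tilde{L}_2}$ take the form $((\pi_2^\sharp\alpha,0),(\pi_1^\sharp\alpha,0))$, mirroring the elements $(\pi_2^\sharp\alpha,\pi_1^\sharp\alpha)$ of $\mathrm{N}_{L_1,L_2}$. Plugging into Lemma \ref{pi and pi' Nijenhuis torsion} and using the bracket identity above, the Nijenhuis-torsion expression for $\mathrm{N}_{\tilde{L}_1,\tilde{L}_2}$ reduces to exactly the formula from Lemma \ref{2-section Nijenhuis torsion} for $\mathrm{N}_{L_1,L_2}$, evaluated on the $A^*$-parts of the test cosections.

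The most delicate step is showing that vanishing of the torsion on the enlarged test space $\mathrm{N}^*_{\tilde{L}_1,\tilde{L}_2}\Diamond\mathrm{N}^*_{\tilde{L}_1,\tilde{L}_2}$ is equivalent to vanishing on $\mathrm{N}^*_{L_1,L_2}\Diamond\mathrm{N}^*_{L_1,L_2}$. Unpacking the definition of the dual relation shows that $((\beta_1,g_1),(\beta_2,g_2))\in\mathrm{N}^*_{\tilde{L}_1,\tilde{L}_2}$ if and only if $(\beta_1,\beta_2)\in\mathrm{N}^*_{L_1,L_2}$, with the $g_i$ unconstrained; since the torsion expression is independent of the $g_i$ by the previous paragraph, the two vanishing conditions coincide. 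Assembling these steps gives $\mathcal{T}_{\mathrm{N}_{\tilde{L}_1,\tilde{L}_2}}=0$ if and only if $\mathcal{T}_{\mathrm{N}_{L_1,L_2}}=0$, yielding the claimed equivalence.
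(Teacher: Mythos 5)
Your proposal is correct and follows essentially the same route as the paper: both reduce the claim, via Lemmas \ref{2-section Nijenhuis torsion} and \ref{pi and pi' Nijenhuis torsion}, to the identity $[(\pi_{1},0),(\pi_{2},0)]_{A\oplus \mathbb{R},(0,1)}((\xi_{1},f_{1}),(\xi_{2},f_{2}),(\xi,f))=[\pi_{1},\pi_{2}]_{A}(\xi_{1},\xi_{2},\xi)$ together with the observation that the $C^{\infty}(M)$-components of the test elements are unconstrained. The only difference is cosmetic: you obtain the key identity by computing the full bracket $([\pi_{1},\pi_{2}]_{A},0)$ from the Schouten-bracket formula of Example \ref{A oplus R} and the vanishing of $\iota_{(0,1)}(\pi_{i},0)$, whereas the paper expands it into four pairings via (\ref{Jacobi structure no wa}).
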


  \begin{proof}
By Lemma \ref{2-section Nijenhuis torsion} and Lemma \ref{pi and pi' Nijenhuis torsion}, $(\pi_{1},\pi_{2})$ is a Poisson pair on $A$ if and only if $[\pi_{1},\pi_{2}]_{A,\phi_0}(\xi_{1},\xi_{2},\xi)=0$ for any $\xi_{i}$ in $\Gamma (A^{*})$ and $\xi$ in $(\pi_{1}^{\sharp})^{-1}(\mbox{Im}\,\pi_{2}^{\sharp})\cap(\pi_{2}^{\sharp})^{-1}(\mbox{Im}\,\pi_{1}^{\sharp})$, i.e., there are $\xi'$ and $\xi''$ in $\Gamma (A^{*})$ such that $\pi_{1}^{\sharp}\xi=\pi_{2}^{\sharp}\xi''$ and $\pi_{2}^{\sharp}\xi=\pi_{1}^{\sharp}\xi'$, and $((\pi_{1},0),(\pi_{2},0))$ is a Jacobi pair on a Jacobi algebroid $(A\oplus \mathbb{R},(0,1))$ if and only if $[(\pi_{1},0),(\pi_{2},0)]_{A\oplus \mathbb{R},(0,1)}((\xi_{1},f_{1}),(\xi_{2},f_{2}),(\xi,f))=0$ for any $(\xi_{i},f_{i})$ in $\Gamma (A^{*})\oplus C^{\infty}(M)$ and $(\xi,f)$ in $((\pi_{1},0)^{\sharp})^{-1}(\mbox{Im}\,(\pi_{2},0)^{\sharp})\cap((\pi_{2},0)^{\sharp})^{-1}(\mbox{Im}\,(\pi_{1},0)^{\sharp})$, i.e., there are $(\xi',f')$ and $(\xi'',f'')$ in $\Gamma (A^{*})\oplus C^{\infty}(M)$ such that $(\pi_{1},0)^{\sharp}(\xi,f)=(\pi_{2},0)^{\sharp}(\xi'',f'')$ and $(\pi_{2},0)^{\sharp}(\xi,f)=(\pi_{1},0)^{\sharp}(\xi',f')$. The condition for $(\xi,f)$ is equivalent with the conditions that $\pi_{1}^{\sharp}\xi=\pi_{2}^{\sharp}\xi''$, that $\pi_{2}^{\sharp}\xi=\pi_{1}^{\sharp}\xi'$ and that $f,f'$ and $f''$ are arbitrary. Therefore $(\xi,f)$ is in $((\pi_{1},0)^{\sharp})^{-1}(\mbox{Im}\,(\pi_{2},0)^{\sharp})\cap((\pi_{2},0)^{\sharp})^{-1}(\mbox{Im}\,(\pi_{1},0)^{\sharp})$ if and only if $\xi$ is in $(\pi_{1}^{\sharp})^{-1}(\mbox{Im}\,\pi_{2}^{\sharp})\cap(\pi_{2}^{\sharp})^{-1}(\mbox{Im}\,\pi_{1}^{\sharp})$ and $f$ is arbitrary. Then we compute
\begin{align*}
\langle (\xi ,f),[(\pi_{1},0)^{\sharp}((\xi _{1},f_{1})),(\pi_{2},0)^{\sharp}((\xi _{2},f_{2}))]_{A\oplus \mathbb{R}}\rangle&=\langle \xi,[\pi_{1}^{\sharp}\xi _{1},\pi_{2}^{\sharp}\xi _{2}]_{A}\rangle,\\
\langle (\xi ,f),[(\pi_{2},0)^{\sharp}((\xi _{1},f_{1})),(\pi_{1},0)^{\sharp}((\xi _{2},f_{2}))]_{A\oplus \mathbb{R}}\rangle&=\langle \xi,[\pi_{2}^{\sharp}\xi _{1},\pi_{1}^{\sharp}\xi _{2}]_{A}\rangle,\\
\langle (\xi ,f),(\pi_{1},0)^{\sharp}[(\xi _{1},f_{1}),(\xi _{2},f_{2})]_{(\pi_{2},0),(0,1)}\rangle&=\langle \xi,\pi_{1}^{\sharp}[\xi _{1},\xi _{2}]_{\pi_{2}}\rangle,\\
\langle (\xi ,f),(\pi_{2},0)^{\sharp}[(\xi _{1},f_{1}),(\xi _{2},f_{2})]_{(\pi_{1},0),(0,1)}\rangle&=\langle \xi,\pi_{2}^{\sharp}[\xi _{1},\xi _{2}]_{\pi_{1}}\rangle,
\end{align*}
so that by (\ref{Jacobi structure no wa}), we have
\begin{align*}
[(\pi_{1},0),(\pi_{2},0)]_{A\oplus \mathbb{R},(0,1)}((\xi _{1},f_{1}),(\xi _{2},f_{2}),(\xi ,f))=[\pi_{1},\pi_{2}]_{A}(\xi _{1},\xi _{2},\xi).
\end{align*}
Therefore the consequence holds.
\end{proof}

The other relation between Jacobi and Poisson pairs is the following theorem.

\begin{theorem}
Let $(\pi_{1},\pi_{2})$ be a pair of $2$-sections on a Jacobi algebroid $(A,\phi_{0})$ over $M$. Then $(\pi_{1},\pi_{2})$ is a Jacobi pair on $(A,\phi_{0})$ if and only if $(\tilde{\pi}_{1},\tilde{\pi}_{2})$ is a Poisson pair on a Lie algebroid $\tilde{A}_{\bar{\phi}_{0}}$ over $M\times \mathbb{R}$, where $\tilde{\pi}_{i}:=e^{-t}\pi_{i}$ in $\Gamma (\tilde{A})$. 
\end{theorem}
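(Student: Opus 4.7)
My plan is to deduce this statement as a direct specialization of Theorem \ref{main 1} (i), applied to the Jacobi bialgebroid $((A,\phi_0),(A_0^*,0))$ from Example \ref{trivial example}, in which $A^*$ carries the trivial Lie algebroid structure and $X_0=0$. With this choice of dual, a Dirac structure (resp.\ Dirac pair) on $(A,\phi_0)$ is literally a Dirac structure (resp.\ Dirac pair) on this bialgebroid, so $(\pi_1,\pi_2)$ being a Jacobi pair on $(A,\phi_0)$ is, by Definition \ref{Jacobi and phi0-presymp pairs}, the same as $(\overline{\graph\pi_1^\sharp},\overline{\graph\pi_2^\sharp})$ being a Dirac pair on $((A,\phi_0),(A_0^*,0))$. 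Moreover, the Maurer-Cartan hypothesis required by Theorem \ref{main 1} is automatic in this setting, since $d_{A^*\!,X_0}$ vanishes on $\Gamma(\Lambda^*A)$ and the equation collapses to $[\pi_i,\pi_i]_{A,\phi_0}=0$, which is exactly the Jacobi condition on $\pi_i$.

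The main computational check is that, on the dual side, the $\hat{X_0}$-structure on $\tilde{A}^*$ collapses to the trivial Lie algebroid. Substituting $X_0=0$ together with the zero bracket and zero anchor on $A^*$ into (\ref{hat kakko}) and (\ref{hat anchor}) gives $[\cdot,\cdot\hat{]}_{A^*}^{0}=0$ and $\hat{\rho}_{A^*}^{0}=0$. Hence $\tilde{A}^*_{\hat{0}}$ is the trivial Lie algebroid over $M\times\mathbb{R}$, so the induced Lie bialgebroid $(\tilde{A}_{\bar{\phi}_0},\tilde{A}^*_{\hat{0}})$ is precisely the one used to define Dirac pairs on the Lie algebroid $\tilde{A}_{\bar{\phi}_0}$ (cf.\ Example \ref{Lie bialgebroids}). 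By Lemma \ref{Jacobi and Poisson structures}, each $\tilde{\pi}_i$ is automatically Poisson on $\tilde{A}_{\bar{\phi}_0}$ whenever $\pi_i$ is Jacobi on $(A,\phi_0)$, so $(\overline{\graph\tilde{\pi}_1^\sharp},\overline{\graph\tilde{\pi}_2^\sharp})$ being a Dirac pair on this Lie bialgebroid translates to $(\tilde{\pi}_1,\tilde{\pi}_2)$ being a Poisson pair on $\tilde{A}_{\bar{\phi}_0}$ in the sense of Section \ref{Dirac pairs on Lie bialgebroids}.

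Stringing these identifications together, Theorem \ref{main 1} (i) supplies the desired equivalence between $(\overline{\graph\pi_1^\sharp},\overline{\graph\pi_2^\sharp})$ being a Dirac pair on $((A,\phi_0),(A_0^*,0))$ and $(\overline{\graph\tilde{\pi}_1^\sharp},\overline{\graph\tilde{\pi}_2^\sharp})$ being a Dirac pair on $(\tilde{A}_{\bar{\phi}_0},\tilde{A}^*_{\hat{0}})$, which by the preceding paragraphs is precisely the statement of the theorem. I do not anticipate any real obstacle: the entire argument is a matter of recognizing that the trivial choice of dual Jacobi algebroid on the $A$-side collapses, under the $\widetilde{\phantom{A}}$-construction, to the trivial dual Lie algebroid on the $\tilde{A}$-side, and then applying the main theorem. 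The only subtlety worth stating carefully is this collapse of $\tilde{A}^*_{\hat{0}}$, which is an immediate substitution into (\ref{hat kakko})--(\ref{hat anchor}).
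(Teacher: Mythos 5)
Your proposal is correct and follows essentially the same route as the paper: the paper's proof likewise combines Lemma \ref{Jacobi and Poisson structures} (each $\tilde{\pi}_i$ is Poisson iff $\pi_i$ is Jacobi) with Theorem \ref{main 1}(i) applied to the trivial dual Jacobi algebroid $(A_0^*,0)$. You merely make explicit the (correct) verification that $\tilde{A}^*_{\hat{0}}$ collapses to the trivial Lie algebroid over $M\times\mathbb{R}$ and that the Maurer--Cartan condition reduces to $[\pi_i,\pi_i]_{A,\phi_0}=0$, details the paper leaves implicit.
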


\begin{proof}
By Lemma \ref{Jacobi and Poisson structures}, a $2$-section $\pi$ on $A$ is a Jacobi structure on $(A,\phi_{0})$ if and only if a $2$-section $\tilde{\pi}$ on $\tilde{A}$ is a Poisson structure on $\tilde{A}_{\bar{\phi}_{0}}$. By the definitions of Jacobi and Poisson pairs and Theorem \ref{main 1}, a pair $(\pi_{1},\pi_{2})$ is a Jacobi pair on $(A,\phi_{0})$ if and only if a pair $(\tilde{\pi}_{1},\tilde{\pi}_{2})$ is a Poisson pair on $\tilde{A}_{\bar{\phi}_{0}}$.
%
\end{proof}

\subsection{J$\Omega$- and $\Omega $N-structures}

In this subsection, we define J$\Omega$- and $\Omega$N-structures on Jacobi algebroids, and show the relationship between J$\Omega$- (resp. $\Omega$N-) structures on Jacobi algebroids and P$\Omega$- (resp. $\Omega$N-) structures on Lie algebroids. By using the relationship, we show that J$\Omega$- and $\Omega$N-structures on Jacobi algebroids can be characterized in terms of Dirac pairs.

 We start with the definitions of J$\Omega$- and $\Omega$N-structures on a Jacobi algebroid.
 
\begin{defn}
 Let $(A,\phi_{0})$ be a Jacobi algebroid over $M$, $\pi$ a $2$-section on $A$ and $\omega$ a $2$-cosection on $A$. Then a pair $(\pi,\omega)$ is a {\it J$\Omega$-structure} on $(A,\phi_{0})$ if $\pi$ is Jacobi and both $\omega$ and $\omega_{N}$ are $d_{A,\phi_{0}}$-closed, where $N:=\pi^{\sharp}\circ \omega^{\flat}$ and $\omega _{N}$ is a $2$-cosection characterized by $\omega_{N}^{\flat}:=\omega^{\flat}\circ N$.  
\end{defn}

\begin{defn}
 Let $(A,\phi_{0})$ be a Jacobi algebroid over $M$, $\omega$ a $2$-cosection on $A$ and $N$ a $(1,1)$-tensor field on $A$. Then a pair $(\omega,N)$ is a {\it $\Omega$N-structure} on $(A,\phi_{0})$ if $\omega^{\flat}\circ N=N^{*}\circ \omega^{\flat}$, $N$ is Nijenhuis and both $\omega$ and $\omega_{N}$ are $d_{A,\phi_{0}}$-closed, where $\omega _{N}$ is a $2$-cosection characterized by $\omega_{N}^{\flat}:=\omega^{\flat}\circ N$. We can also define a {\it weak $\Omega$N-structure} on $(A,\phi_{0})$ by replacing $\mathcal{T}_{N}=0$ with $\omega^{\flat}(\mathcal{T}_{N}(X,Y))=0$ for any $X$ and $Y$ in $\Gamma (A)$ for the definition of a $\Omega$N-structure on $(A,\phi_{0})$.
\end{defn}

It is clear that the definitions of J$\Omega$- and (weak) $\Omega$N-structures on a Jacobi algebroid $(A,\phi_{0})$ coincide with the definitions of P$\Omega$- and (weak) $\Omega$N-structures on a Lie algebroid $A$ when $\phi_{0}=0$.

First, the following proposition means that there is one-to-one correspondence between J$\Omega$-structures on a Jacobi algebroid $(A,\phi_{0})$ and P$\Omega$-structures on a Lie algebroid $\tilde{A}_{\bar{\phi}_{0}}$.

 \begin{prop}\label{POmega and JOmega}
Let $(A,\phi_{0})$ be a Jacobi algebroid over $M$. Then a pair $(\pi,\omega)$ is a J$\Omega$-structure on $(A,\phi_{0})$ if and only if a pair $(\tilde{\pi},\tilde{\omega})$ is a P$\Omega$-structure on $\tilde{A}_{\bar{\phi}_{0}}$, where $\tilde{\pi}=e^{-t}\pi,\tilde{\omega}=e^{t}\omega$.
 \end{prop}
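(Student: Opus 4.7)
The plan is to reduce each of the three conditions defining a J$\Omega$-structure on $(A,\phi_0)$ to the corresponding condition for a P$\Omega$-structure on $\tilde A_{\bar\phi_0}$, using the rescaling relations already established in the proof of Lemma \ref{Jacobi and Poisson structures} and in the proof of Theorem \ref{main 1}. The equivalence splits naturally into three pieces: (a) $\pi$ is Jacobi iff $\tilde\pi$ is Poisson; (b) $\omega$ is $d_{A,\phi_0}$-closed iff $\tilde\omega$ is $\bar d_A^{\phi_0}$-closed; (c) $\omega_N$ is $d_{A,\phi_0}$-closed iff $\tilde\omega_{\tilde N}$ is $\bar d_A^{\phi_0}$-closed, where $N=\pi^\sharp\circ\omega^\flat$ and $\tilde N=\tilde\pi^\sharp\circ\tilde\omega^\flat$.

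Piece (a) is Lemma \ref{Jacobi and Poisson structures} applied with the roles of the Maurer--Cartan equations for $\pi$ and $\tilde\pi$ that appear in its proof (the identity $[\tilde\pi,\tilde\pi\bar{]}_A^{\phi_0}=e^{-2t}[\pi,\pi]_{A,\phi_0}$ suffices, since the Poisson/Jacobi conditions are exactly the vanishing of these brackets). For piece (b), the proof of Lemma \ref{Jacobi and Poisson structures} also records the identity $\bar d_A^{\phi_0}\tilde\omega=e^{t}d_{A,\phi_0}\omega$, which immediately yields the equivalence of the two closedness conditions since $e^{t}$ is nowhere zero and time-dependent sections are curves in $\Gamma(\Lambda^2 A^*)$.

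The main content is piece (c). I would first observe, as done in the proof of Theorem \ref{main 1} (ii), that
\begin{equation*}
\tilde N=\tilde\pi^\sharp\circ\tilde\omega^\flat=(e^{-t}\pi^\sharp)\circ(e^{t}\omega^\flat)=\pi^\sharp\circ\omega^\flat=N,
\end{equation*}
so the $(1,1)$-tensor is unchanged by the tilde operation and, in particular, is independent of $t$. Consequently
\begin{equation*}
\tilde\omega_{\tilde N}^{\flat}=\tilde\omega^\flat\circ\tilde N=e^{t}\omega^\flat\circ N=e^{t}\omega_N^{\flat},
\end{equation*}
i.e.\ $\tilde\omega_{\tilde N}=e^{t}\omega_N$. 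Invoking the identity $\bar d_A^{\phi_0}(e^{t}\eta)=e^{t}d_{A,\phi_0}\eta$ from the proof of Lemma \ref{Jacobi and Poisson structures} once more (applied this time to $\eta=\omega_N$), the closedness of $\tilde\omega_{\tilde N}$ is equivalent to the closedness of $\omega_N$.

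Combining the three equivalences, $(\pi,\omega)$ satisfies all three defining conditions of a J$\Omega$-structure on $(A,\phi_0)$ if and only if $(\tilde\pi,\tilde\omega)$ satisfies all three defining conditions of a P$\Omega$-structure on $\tilde A_{\bar\phi_0}$. I do not anticipate a serious obstacle: the hardest point is verifying the identity $\tilde\omega_{\tilde N}=e^{t}\omega_N$, but this is a direct unwinding of definitions, and all the needed rescaling lemmas are already in hand from the previous section.
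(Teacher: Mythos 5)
Your proposal is correct and follows essentially the same route as the paper: split the equivalence into the Jacobi/Poisson condition, the closedness of $\omega$ versus $\tilde\omega$, and the closedness of $\omega_N$ versus $\tilde\omega_{\tilde N}$, using Lemma \ref{Jacobi and Poisson structures} for the rescaling identities, the observation $\tilde N=\tilde\pi^{\sharp}\circ\tilde\omega^{\flat}=\pi^{\sharp}\circ\omega^{\flat}=N$, and the identity $\tilde\omega_{\tilde N}=e^{t}\omega_N$. This is exactly the paper's argument.
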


 \begin{proof}
 By Lemma \ref{Jacobi and Poisson structures}, a $2$-section $\pi$ on $A$ is a Jacobi structure on $(A,\phi_{0})$ if and only if a $2$-section $\tilde{\pi}$ on $\tilde{A}$ is a Poisson structure on $\tilde{A}_{\bar{\phi}_{0}}$, and a $2$-cosection $\omega$ on $A$ is a $\phi_{0}$-presymplectic structure on $(A,\phi_{0})$ if and only if a $2$-cosection $\tilde{\omega}$ on $\tilde{A}$ is a presymplectic structure on $\tilde{A}_{\bar{\phi}_{0}}$. Setting $N:=\pi^{\sharp}\circ \omega^{\flat}$, we obtain $\tilde{N}:=\tilde{\pi}^{\sharp}\circ \tilde{\omega}^{\flat}=\pi^{\sharp}\circ \omega^{\flat}=N$. For any $\tilde{X}$ and $\tilde{Y}$ in $\Gamma (\tilde{A})$,
\begin{align*}
(\omega_{N})^{\mbox{\textasciitilde}}(\tilde{X},\tilde{Y})=e^{t}\omega_{N}(\tilde{X},\tilde{Y})=e^{t}\langle\omega^{\flat}N\tilde{X},\tilde{Y}\rangle=\langle\tilde{\omega}^{\flat}\tilde{N}\tilde{X},\tilde{Y}\rangle=\tilde{\omega}_{\tilde{N}}(\tilde{X},\tilde{Y}).
\end{align*}
Hence $(\omega_{N})^{\mbox{\textasciitilde}}=\tilde{\omega}_{\tilde{N}}$. Then since $\bar{d}_{A}^{\phi_{0}}\tilde{\omega}_{\tilde{N}}=\bar{d}_{A}^{\phi_{0}}(\omega_{N})^{\mbox{\textasciitilde}}=e^{t}d_{A,\phi_{0}}\omega_{N}$ by Lemma \ref{Jacobi and Poisson structures}, it follows that $\omega_{N}$ is $d_{A,\phi_{0}}$-closed if and only if $\tilde{\omega}_{\tilde{N}}$ is $\bar{d}_{A}^{\phi_{0}}$-closed.
\end{proof}

 \begin{prop}\label{OmegaN and OmegaN}
Let $(A,\phi_{0})$ be a Jacobi algebroid over $M$. Then a pair $(\omega,N)$ is an $\Omega$N- (resp. a weak $\Omega$N-)structure on $(A,\phi_{0})$ if and only if a pair $(\tilde{\omega},N)$ is an $\Omega$N- (resp. a weak $\Omega$N-)structure on $\tilde{A}_{\bar{\phi}_{0}}$, where $\tilde{\pi}=e^{-t}\pi,\tilde{\omega}=e^{t}\omega$ and a $(1,1)$-tensor field $N$ on $A$ is regarded as a $(1,1)$-tensor field independent of $t$ on $\tilde{A}_{\bar{\phi}_{0}}$.
 \end{prop}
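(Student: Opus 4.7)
My plan is to verify each defining condition of an $\Omega$N-structure separately, using the correspondence already established in Lemma~\ref{Jacobi and Poisson structures} (for the closedness conditions) and Lemma~\ref{Nijenhuis structure property} (for the Nijenhuis condition), following the same pattern as the proof of Proposition~\ref{POmega and JOmega}. The four conditions to match are: (a) the compatibility $\omega^{\flat}\circ N=N^{*}\circ \omega^{\flat}$; (b) the Nijenhuis condition on $N$ (or its weak form); (c) the closedness of $\omega$; (d) the closedness of $\omega_{N}$.

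First I would note that since $\tilde{\omega}^{\flat}=e^{t}\omega^{\flat}$ as bundle maps $\tilde{A}\to \tilde{A}^{*}$ and $N$ is independent of $t$, the identities $\tilde{\omega}^{\flat}\circ N = e^{t}(\omega^{\flat}\circ N)$ and $N^{*}\circ \tilde{\omega}^{\flat}=e^{t}(N^{*}\circ \omega^{\flat})$ hold, so (a) for $(\omega,N)$ on $A$ is equivalent to (a) for $(\tilde{\omega},N)$ on $\tilde{A}_{\bar{\phi}_{0}}$. The same computation shows that $\tilde{\omega}_{N}=(\omega_{N})^{\mbox{\textasciitilde}}=e^{t}\omega_{N}$, a fact already observed in the proof of Proposition~\ref{POmega and JOmega}.

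Next, for (c) and (d), I would invoke Lemma~\ref{Jacobi and Poisson structures}: it gives $\bar{d}_{A}^{\phi_{0}}\tilde{\omega}=e^{t}d_{A,\phi_{0}}\omega$, so $\tilde{\omega}$ is $\bar{d}_{A}^{\phi_{0}}$-closed iff $\omega$ is $d_{A,\phi_{0}}$-closed. Applying the same identity to $\omega_{N}$, and using $\tilde{\omega}_{N}=e^{t}\omega_{N}$, yields $\bar{d}_{A}^{\phi_{0}}\tilde{\omega}_{N}=e^{t}d_{A,\phi_{0}}\omega_{N}$, so (d) also transfers. For (b) in the strong form, Lemma~\ref{Nijenhuis structure property} directly gives that $N$ is Nijenhuis on $A$ iff $N$ is Nijenhuis on $\tilde{A}_{\bar{\phi}_{0}}$.

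For the weak $\Omega$N case, I would again use Lemma~\ref{Nijenhuis structure property}, which actually gives the pointwise identity $\mathcal{T}_{N}^{\tilde{A}_{\bar{\phi}_{0}}}(\tilde{X},\tilde{Y})=\mathcal{T}_{N}^{A}(\tilde{X},\tilde{Y})$ for any $\tilde{X},\tilde{Y}\in \Gamma (\tilde{A})$ regarded as time-dependent sections of $A$. Combined with $\tilde{\omega}^{\flat}=e^{t}\omega^{\flat}$, one obtains $\tilde{\omega}^{\flat}(\mathcal{T}_{N}^{\tilde{A}_{\bar{\phi}_{0}}}(\tilde{X},\tilde{Y}))=e^{t}\omega^{\flat}(\mathcal{T}_{N}^{A}(\tilde{X},\tilde{Y}))$, and since the nonvanishing factor $e^{t}$ is irrelevant and $\Gamma (\tilde{A})$ corresponds to curves in $\Gamma (A)$, the weak conditions match. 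No single step here is a real obstacle; the main point is simply to keep track of the scalar factors $e^{t}$ and to recognize that Proposition~\ref{POmega and JOmega} and its proof already contain every nontrivial ingredient.
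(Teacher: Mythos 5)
Your proposal is correct and follows essentially the same route as the paper's proof: transfer the closedness conditions via the scaling identities from Lemma~\ref{Jacobi and Poisson structures} (equivalently, the proof of Proposition~\ref{POmega and JOmega}), transfer the compatibility condition via $\tilde{\omega}^{\flat}=e^{t}\omega^{\flat}$, and transfer the (weak) Nijenhuis condition via Lemma~\ref{Nijenhuis structure property}. Your explicit treatment of the weak case, noting that the nonvanishing factor $e^{t}$ is harmless, is a slightly more detailed version of what the paper leaves implicit.
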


 \begin{proof}
By the proof of Proposition \ref{POmega and JOmega}, a $2$-cosection $\omega$ on $A$ is a $\phi_{0}$-presymplectic structure on $(A,\phi_{0})$ if and only if a $2$-cosection $\tilde{\omega}$ on $\tilde{A}$ is a presymplectic structure on $\tilde{A}_{\bar{\phi}_{0}}$. We have
\begin{align*}
\tilde{\omega}^{\flat}\circ N=e^{t}\omega^{\flat}\circ N,\quad 
N^{*}\circ\tilde{\omega}^{\flat}=e^{t}N^{*}\circ\omega^{\flat},
\end{align*}
so that the commutativity of $\tilde{\omega}$ and $N$ is equivalent with the commutativity of $\omega$ and $N$. By the proof of Proposition \ref{POmega and JOmega} again, $\omega_{N}$ is $d_{A,\phi_{0}}$-closed if and only if $\tilde{\omega}_{\tilde{N}}$ is $\bar{d}_{A}^{\phi_{0}}$-closed. Finally, by Lemma \ref{Nijenhuis structure property}, since $\mathcal{T}_{N}^{A}(\tilde{X},\tilde{Y})=\mathcal{T}_{N}^{\tilde{A}_{\bar{\phi}_{0}}}(\tilde{X},\tilde{Y})$ for any $\tilde{X}$ and $\tilde{Y}$ in $\Gamma (\tilde{A})$
, the consequence holds.
\end{proof}

For a weak $\Omega$N-structure on $(A,\phi_{0})$, we set
\begin{align*}
\mathrm{N}_{(\omega,N)}^{+}:=\{(\omega^{\flat}X,\omega^{\flat}_{N}X)\,|\, X\in A\}\subset \mathrm{N}_{L,L'}^{*},
\end{align*}
where $L:=\graph\omega^{\flat}$ and $L':=\graph\omega_{N}^{\flat}$. The following theorem is characterizations of J$\Omega$- and (weak) $\Omega$N-structures on a Jacobi algebroid $(A,\phi_{0})$ by Dirac pairs, and generalizations of Proposition \ref{POmega to Dirac} and Proposition \ref{weak OmegaN to Dirac}.

 \begin{theorem}\label{JOmega and OmegaN to Dirac}
 Let $(A,\phi_{0})$ be a Jacobi algebroid over $M$, $\pi$ a Jacobi structure on $(A,\phi_{0})$, $\omega$ a $\phi_{0}$-presymplectic structure on $(A,\phi_{0})$ and $N$ a $(1,1)$-tensor field on $A$. Then
\begin{enumerate}
\item[{\rm (i)}] If a pair $(\pi,\omega )$ is a J$\Omega$-structure on $(A,\phi_{0})$, then a pair $(\overline{\graph \pi^{\sharp}}$, $\graph\omega^{\flat})$ is a Dirac pair on $(A,\phi_{0})$. Conversely, if $(\overline{\graph \pi^{\sharp}},\graph\omega^{\flat})$ is a Dirac pair on $(A,\phi_{0})$, and if $\pi $ is non-degenerate, then a pair $(\pi,\omega )$ is a J$\Omega$-structure on $(A,\phi_{0})$.
\item[{\rm (ii)}] If a pair $(\omega ,N)$ is an $\Omega$N-structure on $(A,\phi_{0})$, and if $\mathrm{N}_{(\omega,N)}^{+}=\mathrm{N}_{L,L'}^{*}$, where $L:=\graph\omega^{\flat}$ and $L':=\graph\omega_{N}^{\flat}$, then a pair $(L,L')$ is a Dirac pair on $(A,\phi_{0})$. Conversely, if $(\graph \omega^{\flat},\graph\omega_{N}^{\flat})$ is a Dirac pair on $(A,\phi_{0})$, then a pair $(\omega ,N)$ is a weak $\Omega$N-structure on $(A,\phi_{0})$.
\end{enumerate}
 \end{theorem}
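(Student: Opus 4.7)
The approach is to reduce both assertions to their Lie algebroid counterparts, Propositions \ref{POmega to Dirac} and \ref{weak OmegaN to Dirac}, via the Jacobi-to-Lie lift $(A,\phi_{0})\mapsto \tilde{A}_{\bar{\phi}_{0}}$. All three sides of the relevant diagram have already been transported through this lift: J$\Omega$-structures correspond to P$\Omega$-structures by Proposition \ref{POmega and JOmega}, (weak) $\Omega$N-structures correspond to (weak) $\Omega$N-structures by Proposition \ref{OmegaN and OmegaN}, and the relevant types of Dirac pairs correspond by Theorem \ref{main 1}(ii) and (iii). The argument is essentially a diagram chase through these three correspondences.

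For (i), suppose $(\pi,\omega)$ is a J$\Omega$-structure on $(A,\phi_{0})$. Proposition \ref{POmega and JOmega} produces a P$\Omega$-structure $(\tilde{\pi},\tilde{\omega})$ on $\tilde{A}_{\bar{\phi}_{0}}$; Proposition \ref{POmega to Dirac}(i) applied to it yields the Dirac pair $(\overline{\graph\tilde{\pi}^{\sharp}},\graph\tilde{\omega}^{\flat})$ on $\tilde{A}_{\bar{\phi}_{0}}$; and Theorem \ref{main 1}(ii) descends it to the required Dirac pair on $(A,\phi_{0})$. For the converse, start with the Dirac pair on $(A,\phi_{0})$, lift it by Theorem \ref{main 1}(ii), and apply Proposition \ref{POmega to Dirac}(ii) upstairs. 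The hypothesis needed there, non-degeneracy of $\tilde{\pi}$, follows at once from non-degeneracy of $\pi$, since $\tilde{\pi}^{\sharp}=e^{-t}\pi^{\sharp}$ and the scalar $e^{-t}$ never vanishes. The resulting P$\Omega$-structure then descends to a J$\Omega$-structure via Proposition \ref{POmega and JOmega}.

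For (ii), the scheme is identical but with Proposition \ref{OmegaN and OmegaN} in place of Proposition \ref{POmega and JOmega}, Proposition \ref{weak OmegaN to Dirac} in place of Proposition \ref{POmega to Dirac}, and Theorem \ref{main 1}(iii) in place of (ii). The step requiring extra care is the translation of the hypothesis $\mathrm{N}_{(\omega,N)}^{+}=\mathrm{N}_{L,L'}^{*}$ between $(A,\phi_{0})$ and $\tilde{A}_{\bar{\phi}_{0}}$. Since $\tilde{\omega}^{\flat}=e^{t}\omega^{\flat}$ and $\tilde{\omega}_{N}^{\flat}=e^{t}\omega_{N}^{\flat}$, the common factor $e^{t}$ cancels in the defining equation $\tilde{\omega}_{N}^{\flat}X_{1}=\tilde{\omega}^{\flat}X_{2}$, so $\mathrm{N}_{\tilde{L},\tilde{L}'}$ agrees fiberwise with $\mathrm{N}_{L,L'}$ (and hence $\mathrm{N}_{\tilde{L},\tilde{L}'}^{*}=\mathrm{N}_{L,L'}^{*}$), whereas $\mathrm{N}_{(\tilde{\omega},N)}^{+}=e^{t}\,\mathrm{N}_{(\omega,N)}^{+}$ fiberwise. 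Because both $\mathrm{N}_{(\omega,N)}^{+}$ and $\mathrm{N}_{L,L'}^{*}$ are linear subspaces, they are invariant under the positive scalar $e^{t}$, and the conditions $\mathrm{N}_{(\omega,N)}^{+}=\mathrm{N}_{L,L'}^{*}$ and $\mathrm{N}_{(\tilde{\omega},N)}^{+}=\mathrm{N}_{\tilde{L},\tilde{L}'}^{*}$ are therefore equivalent. With this translation in hand, Proposition \ref{weak OmegaN to Dirac} applies upstairs and descends via Theorem \ref{main 1}(iii) in both directions.

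The main obstacle, although minor, is precisely the fiberwise bookkeeping in (ii): one must carefully track how the sets $\mathrm{N}_{L,L'}$, $\mathrm{N}_{L,L'}^{*}$, and $\mathrm{N}_{(\omega,N)}^{+}$ transform under $\omega\mapsto\tilde{\omega}=e^{t}\omega$ and verify that the equality of sets is preserved by the scalar dilation. Once this is settled, the remainder is a routine assembly of the three ingredients quoted above, and the argument specialises to (i) when $N$ is replaced by $\pi^{\sharp}\circ\omega^{\flat}$ in the obvious way.
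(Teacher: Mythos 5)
Your proposal is correct and follows essentially the same route as the paper: reduce to the Lie algebroid statements (Propositions \ref{POmega to Dirac} and \ref{weak OmegaN to Dirac}) via the lift to $\tilde{A}_{\bar{\phi}_{0}}$, using Propositions \ref{POmega and JOmega} and \ref{OmegaN and OmegaN} together with Theorem \ref{main 1}(ii),(iii), including the same bookkeeping that the condition $\mathrm{N}_{(\omega,N)}^{+}=\mathrm{N}_{L,L'}^{*}$ transfers because the factor $e^{t}$ acts trivially on these sets.
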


\begin{proof}
(i) By Proposition \ref{POmega and JOmega}, $(\pi,\omega)$ is a J$\Omega$-structure on $(A,\phi_{0})$ if and only if $(\tilde{\pi},\tilde{\omega})$ is a P$\Omega$-structure on $\tilde{A}_{\bar{\phi}_{0}}$. By (i) in Proposition \ref{POmega to Dirac}, $(\overline{\graph \tilde{\pi}^{\sharp}},\graph\tilde{\omega}^{\flat})$ is a Dirac pair on $\tilde{A}_{\bar{\phi}_{0}}$. By (ii) in Theorem \ref{main 1}, $(\overline{\graph\tilde{\pi}^{\sharp}},\graph\tilde{\omega}^{\flat})$ is a Dirac pair on $\tilde{A}_{\bar{\phi}_{0}}$ if and only if $(\overline{\graph\pi^{\sharp}},$ $\graph\omega^{\flat})$ is a Dirac pair on $(A,\phi_{0})$. The converse holds by (ii) in Theorem \ref{main 1}, (ii) in Proposition \ref{POmega to Dirac} and Proposition \ref{POmega and JOmega}.

\noindent
(ii) By Proposition \ref{OmegaN and OmegaN}, $(\omega,N)$ is an $\Omega$N-structure on $(A,\phi_{0})$ if and only if $(\tilde{\omega},N)$ is an $\Omega$N-structure on $\tilde{A}_{\bar{\phi}_{0}}$. For any $(\tilde{\omega}^{\flat}\tilde{X},\tilde{\omega}_{N}^{\flat}\tilde{X})$ in $\mathrm{N}_{(\tilde{\omega},N)}^{+}$, we have
\begin{align*}
(\tilde{\omega}^{\flat}\tilde{X},\tilde{\omega}_{N}^{\flat}\tilde{X})=(\omega^{\flat}(e^{t}\tilde{X}),\omega_{N}^{\flat}(e^{t}\tilde{X})).
\end{align*}
Since if $\tilde{X}$ in $\Gamma (\tilde{A})$ is arbitrary, then so is $e^{t}\tilde{X}$ in $\Gamma (\tilde{A})$, it follows
\begin{align*}
\mathrm{N}_{(\tilde{\omega},N)}^{+}=\{(\omega^{\flat}\tilde{X},\omega_{N}^{\flat}\tilde{X})\,|\,\tilde{X} \in \Gamma (\tilde{A})\}.
\end{align*}
Therefore $\mathrm{N}_{(\tilde{\omega},N)}^{+}$ is the set of all curves in $\mathrm{N}_{(\omega,N)}^{+}$. On the other hand, for any $(X,Y)$ in $\mathrm{N}_{L,L'}$, it follows $\omega^{\flat}X=\omega_{N}^{\flat}Y$. Since this is equivalent with $\tilde{\omega}^{\flat}X=\tilde{\omega}_{N}^{\flat}Y$, we obtain $\mathrm{N}_{L,L'}\subset \mathrm{N}_{\tilde{L},\tilde{L}'}$, where $\tilde{L}:=\graph\tilde{\omega}^{\flat},\tilde{L}':=\graph\tilde{\omega}_{N}^{\flat}$. For any $(\tilde{\beta},\tilde{\alpha })$ in $\mathrm{N}_{\tilde{L},\tilde{L}'}^{*}$ and $(X,Y)$ in $\mathrm{N}_{L,L'}\subset \mathrm{N}_{\tilde{L},\tilde{L}'}$, since $\langle \tilde{\alpha},X\rangle=\langle\tilde{\beta},Y\rangle$, $\mathrm{N}_{\tilde{L},\tilde{L}'}^{*}$ is the set of all curves in $\mathrm{N}_{L,L'}^{*}$. Since $\mathrm{N}_{L,L'}^{*}=\mathrm{N}_{(\omega,N)}^{+}$, we obtain $\mathrm{N}_{\tilde{L},\tilde{L}'}^{*}=\mathrm{N}_{(\tilde{\omega},N)}^{+}$. By (i) in Proposition \ref{weak OmegaN to Dirac}, a pair $(\tilde{L},\tilde{L}')$ is a Dirac pair on $\tilde{A}_{\bar{\phi}_{0}}$. By (iii) in Theorem \ref{main 1}, $(\tilde{L},\tilde{L}')$ is a Dirac pair on $\tilde{A}_{\bar{\phi}_{0}}$ if and only if $(\graph\omega^{\flat},\graph\omega_{N}^{\flat})$ is a Dirac pair on $(A,\phi_{0})$. The converse holds by (iii) in Theorem \ref{main 1}, (ii) in Proposition \ref{weak OmegaN to Dirac} and Proposition \ref{OmegaN and OmegaN}.
\end{proof}

\begin{rem}
Theorem \ref{JOmega and OmegaN to Dirac} can also be proved directly by long calculations. However, as above, we can prove Theorem \ref{JOmega and OmegaN to Dirac} more easily by using Theorem \ref{main 1}, Proposition \ref{POmega and JOmega}, Proposition \ref{OmegaN and OmegaN} and the theory of Dirac pairs on Lie algebroids.  
\end{rem}


\begin{example}
In Example \ref{cotan r2 times r}, we denote the opposite of the non-degenerate Jacobi structure corresponding with a $(0,1)$-presymplectic structure $\Omega$ on $(TM\oplus \mathbb{R},(0,1))$ by $\Pi$, i.e., $\Pi$ is a $2$-vector field characterized by $\Pi^\sharp=(\Omega^\flat)^{-1}$. Then three pairs $(\Pi,\omega_{\mathrm{H}})$, $(\Pi,\omega_{\mathrm{E}})$ and $(\Pi,\omega_{\mathrm{P}})$ are J$\Omega$-structures on $(TM\oplus \mathbb{R},(0,1))$ and three pairs $(\Omega,N_{\mathrm{H}})$, $(\Omega,N_{\mathrm{E}})$ and $(\Omega,N_{\mathrm{P}})$ are $\Omega$N-structures on $(TM\oplus \mathbb{R},(0,1))$. In fact, 
we have
\begin{align*}
(\overline{\graph \Pi^{\sharp}},\graph\omega_{\mathrm{H}}^{\flat})&=(\graph (\Pi^{\sharp})^{-1},\graph\omega_{\mathrm{H}}^{\flat})\\
         &=(\graph\Omega^{\flat},\graph\omega_{\mathrm{H}}^{\flat}),\\
(\graph \Omega^{\flat},\graph\Omega_{N_\mathrm{H}}^{\flat})&=(\graph \Omega^{\flat},\graph(\Omega^{\flat}\circ N_{\mathrm{H}}))\\
         &=(\graph \Omega^{\flat},\graph(\Omega^{\flat}\circ (\Omega^{\flat})^{-1}\circ\omega_{\mathrm{H}}^\flat))\\
         &=(\graph \Omega^{\flat},\graph\omega_{\mathrm{H}}^\flat).
\end{align*}
Therefore, since $(\Omega,\omega_{\mathrm{H}})$ is a $\phi_0$-presymplectic pair by Example \ref{cotan r2 times r}, $(\graph \Omega^{\flat},$ $\graph\omega_{\mathrm{H}}^\flat)=(\overline{\graph \Pi^{\sharp}},\graph\omega_{\mathrm{H}}^{\flat})=(\graph \Omega^{\flat},\graph\Omega_{N_\mathrm{H}}^{\flat})$ is Dirac.
Since $\Omega$ is non-degenerate, $(\Pi,\omega_{\mathrm{H}})$ is a J$\Omega$-structure on $(TM\oplus \mathbb{R},(0,1))$ by (i) in Theorem \ref{JOmega and OmegaN to Dirac}. It also follows similarly that $(\Pi,\omega_{\mathrm{E}})$ and $(\Pi,\omega_{\mathrm{P}})$ are J$\Omega$-structures on $(TM\oplus \mathbb{R},(0,1))$. On the other hand, by (ii) in Theorem \ref{JOmega and OmegaN to Dirac}, $(\Omega,N_\mathrm{H})$ is a weak $\Omega$N-structure on $(TM\oplus \mathbb{R},(0,1))$. Since $N_\mathrm{H}$ is Nijenhuis, $(\Omega,N_\mathrm{H})$ is an $\Omega$N-structure on $(TM\oplus \mathbb{R},(0,1))$. It also follows similarly that $(\Omega,N_{\mathrm{E}})$ and $(\Omega,N_{\mathrm{P}})$ are $\Omega$N-structures on $(TM\oplus \mathbb{R},(0,1))$.
\end{example}

\end{document}